\newtheorem{theorem}{Theorem}[section]
\newtheorem{proposition}[theorem]{Proposition}
\newtheorem{corollary}[theorem]{Corollary}
\newtheorem{lemma}[theorem]{Lemma}
\theoremstyle{definition}
\newtheorem{definition}[theorem]{Definition}
\newcommand{\Case}[1]{\textbf{Case #1.}}
\newcommand{\WR}{\mathcal{WR}}
\begin{document}

\title{On unstabilzed genus three critical Heegaard surfaces}
\author{Jungsoo Kim}
\date{25, Mar, 2012}

\begin{abstract}
	Let $M$ be a compact orientable irreducible $3$-manifold and $H$ be an unstabilized genus three Heegaard splitting of $M$.
	In this article, we will define a simplicial complex of weak reducing pairs for $H$ and find several properties of this complex.
	Using this method, we will prove that an unstabilized Heegaard splitting of genus three is critical in a certain condition.
	In addition, we will show that the standard genus three Heegaard splitting for $T^3$ and the induced Heegaard splitting of the three component chain exterior by a certain tunnel system are critical as examples of the main theorem.
\end{abstract}

\address{\parbox{4in}{
	Department of Mathematics, Chung-Ang University, Seoul, Korea}} 
\email{pibonazi@gmail.com}
\subjclass[2000]{57M50}

\maketitle
\tableofcontents

\section{Introduction and result} 

Throughout this paper, all surfaces and 3-manifolds will be taken to be compact and orientable.
In \cite{Bachman1}, Bachman introduced the concept \textit{``a critical surface''} and he proved several theorems about incompressible surfaces, the number of Heegaard splittings with respect to its genus, and the minimal genus common stabilization.
Since a critical surface has disjoint compressions on it's both sides, if the surface is a Heegaard surface, then the splitting is weakly reducible, i.e. a critical Heegaard splitting is a kind of weakly reducible splitting.
But in some aspects, it shares common properties with strongly irreducible splittings. 
For example, if the splitting is strongly irreducible or critical, then the manifold is irreducible (Lemma 3.5 of \cite{Bachman2}.) 
Indeed, the intersection of an incompressible surface $S$ and a Heegaard surface $F$ can be isotoped essential on both $S$ and $F$ if the splitting is critical or strongly irreducible (see Theorem 5.1 of \cite{Bachman1} and Lemma 6 of \cite{Schultens2}.) 
Bachman also proved Gordon's conjecture by using the series of generalized Heegaard splittings and critical Heegaard splittings (see \cite{Bachman2}.)
In his recent work \cite{Bachman3}, he also introduced the concept \textit{``topologically minimal surfaces''}, where a strongly irreducible surface is an index $1$ topological minimal surface, and a critical surface is an index $2$ topological minimal surface, this is a way to regard strongly irreducible surfaces and critical surfaces in a unified viewpoint. 

Although critical Heegaard splittings have many powerful properties as proved in Bachman's recent works, it is not easy to determine whether a weakly reducible splitting is critical.
For genus two $3$-manifolds, a weakly reducible splitting is also a reducible splitting (see \cite{Thompson}, the proof can be extended easily to the case with non-empty boundary.)
Moreover, if a $3$-manifold has a reducible splitting, then the manifold is reducible or the splitting is stabilized (see \cite{SaitoScharlemannSchultens}.)
Since a reducible manifold cannot have a critical Heegaard splitting, we need to consider the manifolds of genus at least three.
In this article, we will prove Theorem \ref{theorem-main}.  

\begin{theorem} \label{theorem-main}
	Let $M$ be an orientable irreducible $3$-manifold and $H=(V,W;F)$ be an unstabilized genus three Heegaard splitting.
	Suppose that there is no weak reducing pair such that each of both disks cuts off a solid torus in its compression body.
	If we can choose two weak reducing pairs $(D_0, E_0)$ and $(D_1, E_1)$ such that $\partial D_0 \cap \partial E_1 \neq \emptyset$ and $\partial D_1 \cap \partial E_0\neq \emptyset$ up to isotopy, then $H$ is critical.
\end{theorem}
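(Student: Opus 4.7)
The plan is to construct explicitly a partition $C_0 \sqcup C_1$ of the set of all compressing disks for $F$ witnessing Bachman's definition of a critical surface. The two given weak reducing pairs will serve as the seeds: place $D_0, E_0 \in C_0$ and $D_1, E_1 \in C_1$. I would then enlarge each $C_i$ using the simplicial complex $\WR$ of weak reducing pairs built in the earlier sections. Concretely, a compressing disk $D'$ is placed in $C_i$ if $D'$ belongs to some weak reducing pair lying in the same connected component of $\WR$ as the seed pair of $C_i$. Condition~(1) of criticality -- the existence in each $C_i$ of compressing disks on opposite sides of $F$ with disjoint boundaries -- is then immediate from the seeds.

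The next step is to check that $C_0 \sqcup C_1$ exhausts the compressing disks and that the two classes are disjoint. Every compressing disk for $F$ participates in some weak reducing pair because $H$ is weakly reducible (it has weak reducing pairs by hypothesis), so the disk lies in some component of $\WR$. To see that the seeds lie in different components, one considers a hypothetical path in $\WR$ from $(D_0, E_0)$ to $(D_1, E_1)$; tracing the consecutive disk swaps along this path must at some stage produce a weak reducing pair that contradicts either $\partial D_0 \cap \partial E_1 \neq \emptyset$ or $\partial D_1 \cap \partial E_0 \neq \emptyset$, both of which are assumed. This is where the symmetric pair of non-intersection hypotheses in the theorem is used.

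The main obstacle is verifying condition~(2) of criticality: for every $D \in C_0$ and $E \in C_1$ lying on opposite sides of $F$, one must have $\partial D \cap \partial E \neq \emptyset$. Suppose for contradiction that $D$ and $E$ are disjoint. Then $(D, E)$ is itself a weak reducing pair, which forces the components of $\WR$ containing the two seeds to coincide unless a structural obstruction is present. In the genus three setting, each of $D, E$ is either non-separating in its compression body or separating, and in the separating case cuts off a solid torus. The theorem's hypothesis -- that no weak reducing pair consists of two disks each cutting off a solid torus -- is precisely the obstruction that blocks the merging argument in the one remaining case where disk-replacement moves in $\WR$ are genuinely unavailable. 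A case-by-case analysis, using disk-swapping or band sum moves to construct explicit paths in $\WR$ and invoking the solid-torus hypothesis at the critical step, produces the needed contradiction and completes the proof.
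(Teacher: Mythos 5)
Your proposal correctly identifies the top-level strategy: partition the compressing disks by connected component of $\WR$ and invoke Bachman's result that a disconnected $\WR$ gives a critical surface (this is exactly Proposition~\ref{prop-Bachman} in the paper). Your verification of condition~(2) of criticality is also essentially right: if $D\in C_0$ and $E\in C_1$ formed a weak reducing pair, Lemma~\ref{lemma-Bachman} would fuse the two components. (A minor slip: it is not true that every compressing disk participates in some weak reducing pair merely because $H$ is weakly reducible; but this does not matter, since disks lying in no weak reducing pair can be assigned to either class without violating condition~(2).)

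The genuine gap is the central step: \emph{why} is $\WR$ disconnected? You propose that a hypothetical path from $(D_0,E_0)$ to $(D_1,E_1)$ ``must at some stage produce a weak reducing pair that contradicts either $\partial D_0\cap\partial E_1\neq\emptyset$ or $\partial D_1\cap\partial E_0\neq\emptyset$.'' This is not how the obstruction works. A path in $\WR$ changes only one disk at a time, and nothing about an intermediate vertex $(D',E')$ along such a path is forced to intersect $D_0$, $D_1$, $E_0$, or $E_1$; the two intersection hypotheses serve only to guarantee that $(D_0,E_0)\neq(D_1,E_1)$ and to rule out their lying in a single $D$-cluster or $E$-cluster. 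The real mechanism, which your proposal never surfaces, is structural: for an unstabilized genus three splitting of an irreducible manifold, $\dim(\WR)\leq 1$ (Lemma~\ref{lemma7}); every edge of the graph $\WR$ lies in a $D$-cluster or $E$-cluster with a unique center whose disk is non-separating (Lemmas~\ref{lemma8}--\ref{lemma-cluster}); any path joining the two seeds must therefore traverse a length-two segment whose edges carry different labels; and by Lemma~\ref{lemma-difffourcurves} and Corollary~\ref{corollary-difffourcurves}, such a segment produces four pairwise disjoint disks whose two separating members form a weak reducing pair in which each disk cuts off a solid torus (or a $T^2\times I$, ruled out separately). This is exactly what the solid-torus hypothesis forbids. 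Without the dimension bound, the cluster dichotomy, and the four-disk corollary, the ``case-by-case analysis'' you allude to cannot be carried out, and the proof does not close.
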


We can induce following corollary,

\begin{corollary} \label{corollary-main}
	Let $M$ be a closed orientable irreducible $3$-manifold and $H=(V,W;F)$ be an unstabilized genus three Heegaard splitting.
	Suppose that $H$ is not an amalgamation of two genus two splittings along a torus.
	If we can choose two weak reducing pairs $(D_0, E_0)$ and $(D_1, E_1)$ such that $\partial D_0 \cap \partial E_1 \neq \emptyset$ and $\partial D_1 \cap \partial E_0\neq \emptyset$ up to isotopy, then $H$ is critical.
\end{corollary}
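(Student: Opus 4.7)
The plan is to derive the corollary from Theorem \ref{theorem-main}. The only additional hypothesis of the theorem is the absence of a weak reducing pair whose two disks each cut off a solid torus in their respective compression bodies; it therefore suffices to prove the claim that if an unstabilized genus three splitting $H$ of a closed orientable irreducible $3$-manifold $M$ admits such a weak reducing pair, then $H$ is an amalgamation of two genus two splittings along an incompressible torus. The contrapositive of this implication, combined with Theorem \ref{theorem-main}, will yield the corollary.

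Assume such a weak reducing pair $(D, E)$ exists, and write $V = T_V \cup_D H_V$ and $W = T_W \cup_E H_W$, where $T_V, T_W$ are the cut-off solid tori and $H_V, H_W$ are genus two handlebodies; then $\partial D$ is a separating curve on $F$ bounding the once-punctured torus $P_D = F \cap \partial T_V$, and similarly for $\partial E$ and $P_E$. If $\partial D$ is isotopic to $\partial E$ on $F$, then after matching boundaries the sphere $D \cup E$ meets $F$ in a single essential curve, making $H$ reducible; combined with the irreducibility of $M$, this forces $H$ to be stabilized (see \cite{SaitoScharlemannSchultens}), contradicting our hypothesis. Hence $\partial D$ and $\partial E$ are non-isotopic, and they cut $F$ into three pieces: $P_D$, $P_E$, and a middle piece $M_F$ which, by an Euler characteristic count, is a twice-punctured torus of genus one. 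Define
\[
T_3 = M_F \cup D \cup E,
\]
an embedded torus in $M$ that separates $M$ into $N_1 = T_V \cup_{P_D} H_W$ and $N_2 = H_V \cup_{P_E} T_W$, each with boundary $T_3$.

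The main step, which I expect to be the principal obstacle, is to show that $T_3$ is incompressible in $M$ and that each $N_i$ carries a genus two Heegaard splitting whose amalgamation along $T_3$ recovers $H$. Incompressibility is established by a cut-and-paste argument: a hypothetical compressing disk for $T_3$ can be put in general position with $\partial D$ and $\partial E$, and innermost-disk analysis inside $V$ and $W$ then produces either a destabilization of $H$ or a reducing sphere in $M$, both excluded by our hypotheses. For the Heegaard structure on $N_1$, push $\partial H_W$ slightly off $\partial N_1$ into the interior to obtain a closed genus two surface $S_1 \subset N_1$; this surface bounds $H_W$ on one side and, on the other, a compression body with positive boundary $S_1$ and negative boundary $T_3$ (built from $T_V$ together with a collar of $M_F \cup E$). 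The analogous construction gives a genus two splitting of $N_2$, and amalgamating these two splittings along $T_3$ reverses the compressions of $F$ along $D$ and $E$, recovering $H$. This contradicts the corollary's hypothesis and completes the deduction from Theorem \ref{theorem-main}.
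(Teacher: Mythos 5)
Your overall strategy matches the paper's: reduce to Theorem \ref{theorem-main} by showing that a weak reducing pair whose disks each cut off a solid torus forces $H$ to be an amalgamation of two genus two splittings along a torus, then take the contrapositive. This is precisely the forward direction of the paper's Lemma \ref{lemma-corollary}, and your dismissal of the case $\partial D$ isotopic to $\partial E$ agrees with the paper. Where you diverge is in how the amalgamation structure is produced, and this is also where the gap sits. The paper does not try to exhibit the two compression bodies by hand; it passes to meridian disks $D'$ of $T_V$ and $E'$ of $T_W$ (non-separating, disjoint from $D$ and $E$, and still a weak reducing pair since the two solid tori are disjoint) and then simply performs weak reduction along $(D',E')$. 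The weak-reduction procedure outputs a generalized Heegaard splitting with two genus-two thick surfaces and the torus $F_{D'E'}$ as thin surface, and amalgamating it recovers $H$ by construction, so the amalgamation structure comes for free. Your torus $T_3 = M_F \cup D \cup E$ is in fact isotopic to $F_{D'E'}$ and your decomposition $M = N_1 \cup_{T_3} N_2$ with $N_1 = T_V \cup_{P_D} H_W$ and $N_2 = H_V \cup_{P_E} T_W$ is correct, but you are then left to verify by hand that the complement of the pushed-in $H_W$ inside $N_1$ is a compression body with $\partial_- = T_3$ and $\partial_+$ of genus two --- the step you yourself flag as ``the principal obstacle.'' The claim is true (the meridian disk $D'$ compresses the pushed-in surface to a torus parallel to $T_3$), but it is not supplied, and it is exactly what weak reduction hands you directly. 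Two further points: the incompressibility of $T_3$ is not part of the corollary's hypothesis, so that sketch is unnecessary and can be dropped; and the closing remark that amalgamation ``reverses the compressions of $F$ along $D$ and $E$'' is imprecise, since $F_{DE}$ is three tori, not a single torus --- the weak reduction being reversed is the one along $D'$ and $E'$.
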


In 2002, Moriah proved that if an orientable $3$-manifold has a weakly reducible Heegaard splitting of minimal genus, then $M$ contains an essential surface of positive genus (see \cite{Moriah}.) 
(In 1987, Casson and Gordon proved this when $M$ is closed and the Heegaard splitting is irreducible (see \cite{CassonGordon}.))
By using Bachman's result (Theorem 5.1 of \cite{Bachman1}), we can directly get the following corollary.
 
\begin{corollary} \label{corollary-main-2}
	Let $M$ be an orientable irreducible genus three $3$-manifold and $H=(V,W;F)$ be a minimal genus Heegaard splitting.
	Suppose that there is no weak reducing pair such that each of both disks cuts off a solid torus in its compression body.
	If we can choose two weak reducing pairs $(D_0, E_0)$ and $(D_1, E_1)$ such that $\partial D_0 \cap \partial E_1 \neq \emptyset$ and $\partial D_1 \cap \partial E_0\neq \emptyset$ up to isotopy, then there is an essential surface $S$ of positive genus in $M$ such that $F\cap S$ is essential on both $F$ and $S$.
\end{corollary}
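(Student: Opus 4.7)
The plan is to obtain Corollary \ref{corollary-main-2} as a direct consequence of Theorem \ref{theorem-main} combined with the two external results already recalled in the introduction: Moriah's theorem on weakly reducible minimal genus splittings, and Bachman's Theorem 5.1 of \cite{Bachman1} on essential intersections between incompressible surfaces and critical Heegaard surfaces. So the corollary is essentially a packaging of Theorem \ref{theorem-main} with the two previously known facts, and the proof should be a short three-step chain.

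First I would verify that the hypotheses of Theorem \ref{theorem-main} are in force. The manifold $M$ is orientable and irreducible by assumption, and the two hypotheses concerning weak reducing pairs (the absence of a pair whose disks both cut off solid tori, and the existence of the pairs $(D_0, E_0)$ and $(D_1, E_1)$ with the prescribed boundary intersections) are literally the same as in Theorem \ref{theorem-main}. The only point requiring a brief remark is that $H$ is unstabilized: since $H$ is assumed to realize the minimal genus of $M$, a destabilization of $H$ would yield a Heegaard splitting of smaller genus, contradicting minimality. Hence Theorem \ref{theorem-main} applies and gives that $H$ is critical.

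Next, the mere existence of the weak reducing pair $(D_0, E_0)$ already shows that $H$ is weakly reducible, and by hypothesis $H$ is of minimal genus. Moriah's theorem therefore provides an essential surface $S \subset M$ of positive genus. Finally, since $H$ is critical, Theorem 5.1 of \cite{Bachman1} lets us isotope $S$ so that $F \cap S$ is essential on both $F$ and $S$, which is precisely the conclusion of the corollary.

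There is no genuine obstacle in the argument itself; all of the substantive work is hidden inside Theorem \ref{theorem-main} and the two cited theorems. The only minor thing to be careful about is the unstabilized condition, which must be extracted from the minimal genus assumption before Theorem \ref{theorem-main} can be invoked.
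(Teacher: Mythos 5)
Your proposal is correct and matches the paper's intended argument exactly: the paper introduces Moriah's theorem and Bachman's Theorem 5.1 of \cite{Bachman1} immediately before the corollary precisely so that the corollary follows from Theorem \ref{theorem-main} plus these two cited results, and your observation that minimal genus implies unstabilized is the small but necessary step to invoke Theorem \ref{theorem-main}.
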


In a genus three Heegaard splitting, if each of both disks for a weak reducing pair cuts off a solid torus in its compression body, then both disks are separating in their compression bodies.
Therefore, we can use Theorem \ref{theorem-main} and its corollaries when if there is no weak reducing pair whose two disks are separating in their compression bodies.

This article is organized as follows.
In section \ref{section2}, we introduce some basic notions, define the complex of weak reducing pairs $\WR$, and find basic properties of $\WR$, especially about the criticality of a Heegaard surface, the shapes of simplices in $\WR$, and stabilizations of a Heegaard surface.
In section \ref{section-proof-main-theorem}, we will induce special properties of $\WR$ when the manifold is irreducible, the splitting is unstabilzed, and the genus is three.
By using these properties, we will prove Theorem \ref{theorem-main}.
In section \ref{section-proof-corollary-main}, we will find additional properties of $\WR$ when the manifold is closed and prove Corollary \ref{corollary-main}.
In section \ref{section-examples}, we will show that the standard splitting of genus three for the three-torus $T^3$ is critical and find an incompressible surface $S'$ which satisfies Corollary \ref{corollary-main-2} as an example of Theorem \ref{theorem-main}. In addition, we will show that the induced Heegaard splitting of the three component chain exterior by a certain tunnel system is critical.

\section{Critical surfaces and the complex of weak reducing pairs\label{section2}}

A \textit{compression body} is a $3$-manifold which can be obtained by starting with some closed, orientable, connected surface $F$, forming the product $F\times I$, attaching some number of $2$-handles to $F\times\{1\}$ and capping off all resulting $2$-sphere boundary components that are not contained in $F\times\{0\}$ with $3$-balls.
The boundary component $F\times\{0\}$ is referred to as $\partial_+$. The rest of the boundary is referred to as $\partial_-$. 
A \textit{Heegaard splitting} of a $3$-manifold $M$ is an expression of $M$ as a union $V\cup_F W$, where $V$ and $W$ are compression bodies that intersect in a transversally oriented surface $F=\partial_+V=\partial_+ W$.
We will use the expression $(V,W;F)$ for a Heegaard splitting.
If $(V,W;F)$ is a Heegaard splitting of $M$ then we say that $F$ is a \textit{Heegaard surface}.
We say that the pair $(D,E)$ is a \textit{weak reducing pair} for $F$ if $D\subset V$ and $E\subset W$ are disjoint compressing disks.
A Heegaard surface is \textit{strongly irreducible} if it is compressible to both sides but has no weak reducing pairs.
From now, we will use the letter ``D'' for compressing disks in $V$, ``E'' for compressing disks in $W$, ``F'' for the Heegaard surface in the given Heegaard splitting, and ``H'' for the name of a Heegaard splitting. 

\begin{definition}[D. Bachman, Definition 3.3 of \cite{Bachman2}]\label{definition-critical}
	Let $F$ be a Heegaard surface in some $3$-manifold which is compressible to both sides.
	The surface $F$ is \textit{critical} if the set of all compressing disks for $F$ can be partitioned into subsets $C_0$ and $C_1$ such that the follows hold.
	\begin{enumerate}
		\item For each $i=0,1$ there is at least one weak reducing pair $(D_i,E_i)$, where $D_i$, $E_i\in C_i$.
		\item If $D\in C_i$ and $E\in C_j$, then $(D,E)$ is not a weak reducing pair for $i\neq j$.
	\end{enumerate}
\end{definition}

Note that the definition of \textit{``critical surface''} of \cite{Bachman2} is significantly simpler and slightly weaker, than the one given in \cite{Bachman1}.
In other words, anything that was considered critical in \cite{Bachman1} is considered critical here as well.

Let $M$ be a compact orientable $3$-manifold with possibly non-empty boundary, and suppose that $M$ has a weakly reducible Heegaard splitting $H=(V,W;F)$. 

Let $D$ and $D'$ be compressing disks in $V$.
By abuse of terminology, we will just say that $D\cap D'=\emptyset$ ($E\cap E'=\emptyset$ resp.) if they are not isotopic in $V$ ($W$ resp.) and $D$ misses $D'$ ($E$ misses $E'$ resp.).
In the case $D$ and $D'$ ($E$ and $E'$ resp.) are isotopic in $V$ ($W$ resp.), we will denote it as $D=D'$ ($E=E'$ resp.) even if $D$ misses $D'$ ($E$ misses $E'$ resp.)
Similarly, we will just say that $\partial D\cap \partial E=\emptyset$ if $\partial D$ and $\partial E$ are not isotopic in $F$ and $\partial D$ misses $\partial E$.
In the case $\partial D$ and $\partial E$ are isotopic in F, we will denote it as $\partial D=\partial E$ even if $\partial D$ misses $\partial E$.
Therefore, for a weak reducing pair $(D,E)$, we get either $\partial D\cap \partial E=\emptyset$ or $\partial D=\partial E$.
If there is a weak reducing pair which holds the latter case, the splitting is reducible. 

\begin{definition}
	Define \textit{the complex of weak reducing pairs $\WR$} as follows.
	\begin{enumerate}
		\item Each vertex of $\WR$ is a weak reducing pair for $F$.
		\item Two vertices $v=(D,E)$ and $w=(D',E')$ in $\WR$ are the same if and only if $D=D'$ and $E=E'$.
		\item Assign an edge (or a $1$-simplex) $e$ between two different vertices $v=(D,E)$ and $w=(D',E')\in \WR$ if (a) $D=D'$ and $E\cap E'=\emptyset$, or (b) $D\cap D'=\emptyset$, and $E=E'$.
		The edge $e$ between $v$ and $w$ is determined uniquely since it is impossible that (a) and (b) occur simultaneously. 
	\end{enumerate}
	We define an \textit{$n$-simplex} in $\WR$ by $(n+1)$-vertices $v_0$, $\cdots$, $v_n$ if there is an edge between $v_i$ and $v_j$ for every choice of $i$ and $j$, where $0\leq i\neq j\leq n$.
	$\WR$ depends on the manifold and the Heegaard splitting.
	If we need to make sure of the Heegaard surface or the manifold, denote the complex as $\WR(F)$ or $\WR(M;F)$.
\end{definition}

It may be possible that an edge $(D_1,E)-(D_2,E)$ exists but $\partial D_1 = \partial D_2$ if the manifold has a sphere boundary.
But if we consider irreducible manifolds other than $B^3$, then a Heegaard splitting consists of non-punctured compression bodies.
Therefore, if $M$ is an irreducible manifold other than $B^3$ and $\partial D_1 = \partial D_2$, then we can isotope $D_1$ and $D_2$ in $V$ so that $D_1 = D_2$  since a compression body is irreducible.
Similary, if $M$ is an irreducible manifold other than $B^3$, $\partial D_1 \cap \partial D_2=\emptyset$, and $\partial D_1$ is not isotopic to $\partial D_2$ in $F$, then we can isotope $D_1$ and $D_2$ in $V$ so that  $D_1\cap D_2=\emptyset$. 

In section 8 of \cite{Bachman2}, D. Bachman considered a sequence of compressing disks 
$$D=D_0 - E=E_0 - D_1 - E_1 - \cdots - D'=D_m - E'=E_m,$$
where (a) $D_i = D_{i+1}$ or $D_i\cap D_{i+1}=\emptyset$ and (b) $E_i = E_{i+1}$ or $E_i\cap E_{i+1}=\emptyset$ for each $0\leq i \leq m-1$ and both $(D_i, E_i)$ for $0\leq i\leq m$ and $(D_{i+1},E_i)$ for $0\leq i \leq m-1$ are weak reducing pairs.
He defined the distance between two weak reducing pairs $(D,E)$ and $(D',E')$ using the minimal length of this sequence.
In particular, if there is no such a   sequence between them, the distance is defined as $\infty$.
He also proved that if the distance between two weak reducing pairs is $\infty$, then the Heegaard surface is critical.
We can rewrite this result in terms of $\WR$ as follows.

\begin{proposition}[D. Bachman, Lemma 8.5 of \cite{Bachman2}]\label{prop-Bachman}
	If $\WR$ is disconnected, then the Heegaard surface is critical.
\end{proposition}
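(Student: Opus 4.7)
The plan is to translate Bachman's original distance argument in \cite{Bachman2} into the language of $\WR$. A Bachman sequence $D=D_0 - E=E_0 - D_1 - E_1 - \cdots - D_m - E_m$ between two weak reducing pairs corresponds precisely to a walk in $\WR$ from $(D_0, E_0)$ to $(D_m, E_m)$, so two pairs have finite Bachman-distance if and only if they lie in the same connected component of $\WR$. Under this dictionary the proposition becomes a direct restatement of Lemma 8.5 of \cite{Bachman2}.

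For a self-contained verification, the strategy is to exhibit a partition $C_0 \sqcup C_1$ of all compressing disks for $F$ satisfying Definition \ref{definition-critical}. Fix two distinct components $\mathcal{C}_0, \mathcal{C}_1$ of $\WR$ with chosen vertices $(D_0, E_0) \in \mathcal{C}_0$ and $(D_1, E_1) \in \mathcal{C}_1$. Declare a compressing disk $D$ for $F$ to lie in $C_0$ exactly when $D$ appears as a coordinate of some weak reducing pair in $\mathcal{C}_0$, and place every other compressing disk (including disks in no weak reducing pair at all) in $C_1$. Condition (1) of the critical definition on the $C_0$ side is immediate since the pair $(D_0, E_0)$ has both coordinates in $C_0$.

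The core technical step is the following claim: \emph{two weak reducing pairs that share a coordinate (either a $V$-disk or a $W$-disk) lie in the same component of $\WR$.} Granting the claim, the remaining verifications are quick. For condition (1) on the $C_1$ side, if $D_1 \in C_0$ then some witnessing pair $(D_1, E') \in \mathcal{C}_0$ and the claim would force $(D_1, E_1) \in \mathcal{C}_0$, contradicting $\mathcal{C}_0 \neq \mathcal{C}_1$; a symmetric argument gives $E_1 \in C_1$. Condition (2) follows by the same mechanism: a weak reducing pair $(D, E)$ with $D \in C_0$ and $E \in C_1$ would, via the claim applied to $D$, lie in $\mathcal{C}_0$, placing $E$ in $C_0$ as well, a contradiction.

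The hard part is the claim itself, which is a connectivity statement for the disk complex of a compression body cut along a disk. Given $(D, E)$ and $(D, E')$ sharing the $V$-disk $D$, one seeks a sequence of compressing disks in $W$ disjoint from $D$ interpolating between $E$ and $E'$; such a sequence yields a path $(D, E) - (D, F_1) - \cdots - (D, E')$ in $\WR$ using only type-(a) edges. This is straightforward when $D$ is non-separating in $W$, since the cut-open compression body $W \setminus N(D)$ has connected disk complex, and similarly when $D$ is separating but $E, E'$ lie on the same side of the cut. The delicate case is $D$ separating with $E, E'$ on different sides, where no interpolating sequence exists inside $W$ and one must detour through a $V$-side disk $D'$ simultaneously disjoint from $D$, $E$, and $E'$, producing the length-three path $(D, E) - (D', E) - (D', E') - (D, E')$. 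Guaranteeing the existence of such a detour disk $D'$ is precisely the subtle step secured by Bachman's argument, and the plan is to invoke it rather than redo it.
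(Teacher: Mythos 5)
Your first paragraph is exactly the paper's approach: establish a dictionary between Bachman sequences and walks in $\WR$, under which the proposition becomes a restatement of Lemma 8.5 of \cite{Bachman2}. The only difference is that the paper works out both directions of the correspondence explicitly --- in particular, handling the degenerate steps where consecutive disks in a Bachman sequence coincide --- whereas you assert the correspondence in one sentence. Since that correspondence is the entire content of the paper's proof, you should not treat it as obvious; the bookkeeping around ``$D_i = D_{i+1}$'' versus ``$D_i \cap D_{i+1} = \emptyset$'' is the one place where something could go wrong.

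The rest of your proposal is a bonus the paper does not attempt: a direct partition-based verification of criticality. This is correct in outline and accurately identifies the needed ingredient as the ``shared coordinate'' connectivity claim, which is precisely the paper's Lemma~\ref{lemma-Bachman} (Bachman's Lemma 8.4). Your partition bookkeeping (conditions (1) and (2) of Definition~\ref{definition-critical}) is right granted that claim. However, your sketch of the claim's hard case is mislocated: if $\partial D$ separates $F$ and $\partial E$, $\partial E'$ lie on \emph{opposite} sides, then $\partial E \cap \partial E' = \emptyset$, and irreducibility of $W$ gives disjoint representatives at once --- so $(D,E)-(D,E')$ is already an edge or the two vertices coincide. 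No detour through a $V$-side disk is needed. The genuinely delicate case is when $\partial E$ and $\partial E'$ lie in the \emph{same} component of $F - \partial D$ and intersect essentially, where one must perform boundary-surgery to interpolate while keeping the intermediate disks disjoint from $D$. Since you ultimately invoke Bachman's Lemma 8.4 rather than reprove it, this misdiagnosis does not break the argument, but it is worth correcting if you keep the self-contained paragraph.
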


\begin{proof}
	We will prove that two vertices $(D,E)$ and $(D',E')$ in $\WR$ are connected by a union of $1$-simplices if and only if the distance between $(D,E)$ and $(D',E')$ is finite. Therefore, this is equivalent to  Lemma 8.5 of \cite{Bachman2}.

	Suppose that $(D,E)$ and $(D',E')$ in $\WR$ are connected by a union of $m$ $1$-simplices,
	$$(D=D_0,E=E_0)-_{e_1}(D_1,E_1)-_{e_2}\cdots-_{e_m}(D'=D_m,E'=E_m).$$
	Each $1$-simplex $(D_{i-1},E_{i-1})-_{e_i}(D_i,E_i)$ denotes that (a) $D_{i-1} = D_i$ and $E_{i-1}\cap E_i=\emptyset$ or (b) $D_{i-1} \cap D_i=\emptyset$ and $E_{i-1}= E_i$.
	In both cases, we can assign a sequence like following to $e_i$,
	$$D_{i-1}-E_{i-1}-D_{i-1}-E_i-D_i-E_i.$$
	Similarly, for the $1$-simplex $(D_{i},E_{i})-_{e_{i+1}}(D_{i+1},E_{i+1})$, we can assign a sequence like following to $e_{i+1}$,
	$$D_{i}-E_{i}-D_{i}-E_{i+1}-D_{i+1}-E_{i+1}.$$
	The sequence corresponding to $e_i$ ends with $D_i-E_i$ and that corresponding to $e_{i+1}$ starts with $D_i-E_i$.
	Therefore, if we connect these two sequences identifying the common segment $D_i - E_i$ for all $1\leq i\leq m-1$, then we get the wanted sequence, i.e. the distance between $(D,E)$ and $(D',E')$ is finite.

	Conversely, suppose that the distance between $(D,E)$ and $(D',E')$ is finite, i.e. there is a sequence like following,
$$D=D_0 - E=E_0 - D_1 - E_1 - \cdots - D'=D_m - E'=E_m.$$
	Now we will follow the procedure described below.
\begin{enumerate}
	\item Read the sequence from $D_0 - E_0$. Initially, we assign a vertex $(D_0,E_0)$.
	\item \label{bachman85-proof-2} For $D_0-E_0-D_1$, if $D_0\cap D_{1}=\emptyset$, then we assign a $1$-simplex $e$ in $\WR$, $(D_0,E_0)-_{e}(D_1,E_0)$.
	If $D_0=D_1$, then we define $(D_1,E_0)$ as $(D_0,E_0)$.
	\item \label{bachman85-proof-3} For $E_0-D_1-E_1$, if $E_0\cap E_{1}=\emptyset$, then we assign a $1$-simplex $f$ in $\WR$, $(D_1,E_0)-_{f}(D_1,E_1)$.
	If $E_0=E_1$, then we define $(D_1,E_1)$ as $(D_1,E_0)$.
	\item We repeat the steps  (\ref{bachman85-proof-2}) and (\ref{bachman85-proof-3}), i.e. consider $D_i-E_i-D_{i+1}$ and $E_i-D_{i+1}-E_{i+1}$ for all $1\leq i \leq m-1$.
	\item Finish after the step $E_{m-1}-D_m-E_m$.
\end{enumerate}
Here, we get a union of $1$-simplices which connects $(D,E)$ and $(D',E')$ in $\WR$.
\end{proof}

In terms of $\WR$, we can also get the following lemma.

\begin{lemma}[D. Bachman, Lemma 8.4 of \cite{Bachman2}]\label{lemma-Bachman}
	If there are two different vertices $v=(D,E)$ and $w=(D,E')$, or $v=(D,E)$ and $w=(D',E)$ in $\WR$, then there is a path from $v$ to $w$ in $\WR$.
\end{lemma}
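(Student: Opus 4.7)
By the obvious $V$--$W$ symmetry, I would focus on the case $v = (D,E)$ and $w = (D,E')$, sharing the disk $D \subset V$. Isotope $E$ and $E'$ in $W$ so that $|E \cap E'|$ is minimized; since both $E$ and $E'$ are already disjoint from $D$, the isotopy can be performed rel.\ a regular neighborhood of $D$. The proof is by induction on $|E \cap E'|$. In the base case $|E \cap E'| = 0$, either $E = E'$ in $W$, contradicting $v \ne w$, or the definition of an edge in $\WR$ gives a single $1$-simplex from $v$ to $w$; in either event we are done.

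For the inductive step the goal is to produce an intermediate vertex $(D, E_1) \in \WR$ with $E_1 \cap E = \emptyset$ in $W$ and $|E_1 \cap E'| < |E \cap E'|$. Once such a disk is found, $(D,E_1)$ is joined to $v$ by an edge of $\WR$, and by the inductive hypothesis $(D,E_1)$ is connected to $w$. The disk $E_1$ is built by a standard disk-surgery on $E$ using $E'$. If some component of $E \cap E'$ is a simple closed curve, take one that is innermost on $E'$, bounding $\Delta \subset E'$; this curve also bounds a subdisk $\Delta' \subset E$, and $E_1 := (E \setminus \Delta') \cup \Delta$, pushed slightly off $E$, is the desired disk. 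If every component of $E \cap E'$ is an arc, take an arc outermost on $E'$, cobounding a subdisk $\Delta \subset E'$ with an arc of $\partial E'$; this arc cuts $E$ into two subdisks, and attaching $\Delta$ to either of them yields a candidate $E_1$. In every case $E_1$ lies in a regular neighborhood of $E \cup E'$ in $W$, hence is disjoint from $D$, so $(D, E_1)$ is a weak reducing pair as soon as $\partial E_1$ is essential on $F$.

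The main obstacle, as always with such surgery arguments, is arranging that at least one candidate $E_1$ has essential boundary. In the circle-surgery case this is automatic since $\partial E_1 = \partial E$. In the outermost-arc case the two candidates have boundaries obtained from $\partial E$ by band-summing along the arc of $\partial \Delta$ that lies in $\partial E'$; the standard Jaco-style observation is that if both candidates had inessential boundary on $F$, then one could perform an isotopy of $E$ across the resulting disk in $F$ to reduce $|E \cap E'|$, contradicting minimality. Hence at least one choice of $E_1$ is a bona fide compressing disk, and the induction closes. This last point is the only nontrivial step; the rest of the argument is a formal transcription of Bachman's distance argument into the $\WR$-language already used in the proof of Proposition~\ref{prop-Bachman}.
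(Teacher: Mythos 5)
The paper does not actually prove this lemma: unlike Proposition~\ref{prop-Bachman}, for which the author writes out the translation between ``connected in $\WR$'' and Bachman's notion of finite distance, Lemma~\ref{lemma-Bachman} is simply cited to Bachman (Lemma~8.4 of \cite{Bachman2}) with no argument supplied. So your proposal is not an alternative route to the paper's proof but a replacement of the citation by an explicit proof. The strategy you use --- minimize $|E\cap E'|$, induct on the intersection number, and produce an intermediate vertex $(D,E_1)$ by innermost-circle or outermost-arc surgery --- is the standard disk-surgery argument and is the same flavor of argument Bachman gives; this is a reasonable thing to reproduce.

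A few places want tightening. You need $E_1$ not isotopic to $E$ in $W$ for $(D,E)-(D,E_1)$ to be an edge of $\WR$, and you never say why. The justification is the minimality you set up at the start: if $E_1\cong E$ in $W$, then the minimal intersection number of $E_1$ with $E'$ equals that of $E$ with $E'$, contradicting $|E_1\cap E'| < |E\cap E'| = $ minimum. This same remark shows the circle case is in fact empty when the compression body $W$ is irreducible (i.e.\ $\partial_-W$ has no sphere): there $\partial E_1=\partial E$ forces $E_1\cong E$ (the sphere made from the innermost subdisk of $E'$ and the subdisk of $E$ it caps off bounds a ball), so the case contradicts minimality outright. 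Keeping it does no harm, but it is worth saying so. In the outermost-arc case the essentiality step is the genuine content, and the one-sentence appeal to a ``Jaco-style observation'' compresses it too much: the correct statement is that if even one candidate, say $E_2$, had inessential boundary, then $\partial E_2$ bounds a disk $B\subset F$, irreducibility gives a ball bounded by $E_2\cup B$, and pushing the relevant subdisk of $E$ across that ball removes the outermost arc, contradicting minimality --- so in fact both candidates are compressing disks, and you may take either one as $E_1$. Finally, ``rel.\ a regular neighborhood of $D$'' should read ``rel.\ a neighborhood of $\partial D$ in $F$,'' since $D\subset V$ and $E,E'\subset W$ live on opposite sides of $F$ and can only interact along $F$. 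With these repairs the argument is complete.
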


We denote a 2-simplex $\Delta$ determined by the three vertices $u$, $v$, $w$ in $\WR$ as $u-_e v-_f w-_g u$, where $e$ is the $1$-simplex in $\Delta$ between $u$ and $v$, $f$ is that between $v$ and $w$, and $g$ is that between $w$ and $u$.

\begin{lemma}\label{lemma1}
	Suppose that $\Delta$ is a 2-simplex in $\WR$ determined by three vertices $u$, $v$, $w$.
	Then $\Delta$ has the form $(D,E)-_e(D,E')-_f(D,E'')-_g(D,E)$ or  $(D,E)-_e(D',E)-_f(D'',E)-_g(D,E)$.
\end{lemma}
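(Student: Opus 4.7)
The plan is a direct case analysis on the types of the three edges of $\Delta$. By the definition of edge in $\WR$, every edge falls into exactly one of two mutually exclusive types: a \emph{$V$-type} edge, where the two weak reducing pairs share a common $V$-disk while their $W$-disks are disjoint and non-isotopic; or a \emph{$W$-type} edge, where the two pairs share a common $W$-disk while their $V$-disks are disjoint and non-isotopic. The conventions from the previous section that ``$D=D'$'' (isotopic in $V$) and ``$D\cap D'=\emptyset$'' (disjoint and non-isotopic in $V$) are mutually exclusive, and similarly on the $W$ side, are what make these two edge types genuinely disjoint alternatives. The goal is to show that the three edges of $\Delta$ are all of the same type.

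If all three edges of $\Delta$ are $V$-type, then by the transitivity of the isotopy relation on compressing disks, the $V$-coordinates $D_u$, $D_v$, $D_w$ collapse to a single disk $D$, while the three $W$-coordinates are pairwise disjoint and non-isotopic; this is exactly the first form in the statement. The case of three $W$-type edges symmetrically yields the second form.

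The main content is therefore to rule out mixed triangles. I would apply the pigeonhole principle: among the three edges of $\Delta$, at least two are of the same type, say $V$-type. Any two edges of a triangle cover all three vertices, so the two $V$-disk equalities along those two edges force $D_u$, $D_v$, $D_w$ all to coincide with a single disk $D$ up to isotopy. But then the remaining edge cannot be $W$-type: being $W$-type would require its two endpoints to have disjoint, non-isotopic $V$-disks, contradicting the fact that both of their $V$-coordinates equal $D$. Hence the third edge is $V$-type as well, and we fall into the first case of the previous paragraph. The same argument with $V$ and $W$ interchanged handles the situation where two edges are $W$-type. There is no deep obstacle in this argument; the whole lemma is combinatorial once the dichotomy of edge types has been isolated, and the only subtlety is to keep the paper's notational conventions straight so that the two types really are exclusive.
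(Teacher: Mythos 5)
Your proposal is correct, and it is essentially the same argument as the paper's: both rely on the mutual exclusivity of the two edge types and the fact that in a triangle the constraints propagate around the three edges. You organize it a bit more cleanly via pigeonhole (two of the three edges share a type, and those two then force the third), whereas the paper negates the conclusion and runs through the two mixed configurations to reach a contradiction; the underlying combinatorial content is identical.
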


\begin{proof}
	If $\Delta$ does not have the form $(D,E)-_e(D,E')-_f(D,E'')-_g(D,E)$ or  $(D,E)-_e(D',E)-_f(D'',E)-_g(D,E)$, then we get two cases by fixing the initial and terminal vertices as $(D,E)$ and reading $u-_e v-_f w-_g u$ from the left.

	\Case{1} $\Delta=(D,E)-_e(D,E')-_f(D',E')-_g(D,E)$.\\
	From the edge $e$, $E\cap E'=\emptyset$.
	Therefore, we get $D= D'$ from the edge $g$, this contradicts the existence of the edge $f$.

	\Case{2} $\Delta=(D,E)-_e(D',E)-_f(D',E')-_g(D,E)$.\\
	From the edge $e$, $D\cap D'=\emptyset$.
	Therefore, we get $E=E'$ from the edge $g$, this contradicts the existence of the edge $f$.
\end{proof}

\begin{lemma}\label{lemma2}
	Suppose that $\Sigma$ is an $n$-simplex in $\WR$ determined by $v_0$, $\cdots$, $v_n$.
	Then the vertices of $\Sigma$ have the form (a) $v_0=(D_0,E)$, $\cdots$ $v_n=(D_n,E)$ or (b) $v_0=(D,E_0)$, $\cdots$, $v_n=(D,E_n)$.
\end{lemma}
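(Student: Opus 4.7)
The plan is to prove Lemma \ref{lemma2} by induction on $n$, using Lemma \ref{lemma1} as the main engine. The base case $n=1$ is immediate from the definition of an edge in $\WR$: any two adjacent vertices share either a $V$-disk or a $W$-disk, which is exactly conclusions (a) or (b) restricted to two vertices. The case $n=2$ is precisely the content of Lemma \ref{lemma1}.

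For the inductive step, I would fix $n\geq 3$ and assume the statement holds for every $(n-1)$-simplex. Restricting $\Sigma$ to the face determined by $v_0,\ldots,v_{n-1}$ yields an $(n-1)$-simplex, so by the inductive hypothesis these vertices share either a common $W$-disk or a common $V$-disk. Without loss of generality, assume the former, and write $v_i=(D_i,E)$ for $0\leq i\leq n-1$. Since distinct vertices of $\WR$ having a common $W$-disk must have distinct $V$-disks (otherwise they would be the same vertex), the disks $D_0,\ldots,D_{n-1}$ are pairwise non-isotopic; in particular $D_0\neq D_1$.

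Now I would focus on the $2$-simplex spanned by $v_0$, $v_1$, and $v_n$, which exists because $\Sigma$ is an $n$-simplex and hence contains every $2$-face on its vertex set. By Lemma \ref{lemma1}, this $2$-simplex must either share a common $V$-disk or share a common $W$-disk. The former is excluded because $D_0\neq D_1$; therefore all three vertices share the $W$-disk $E$. Consequently $v_n=(D_n,E)$ for some compressing disk $D_n\subset V$, which extends form (a) to all of $v_0,\ldots,v_n$. The case where the $(n-1)$-face gives form (b) is handled symmetrically by swapping the roles of $V$ and $W$.

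I do not expect any real obstacle: the entire argument rests on the fact that Lemma \ref{lemma1} already forces a rigid ``star-shape'' on every triangle, and once two vertices of a triangle commit to sharing (say) the $W$-coordinate, Lemma \ref{lemma1} forces the third vertex to commit to the same coordinate. The only subtlety worth stating explicitly is the non-triviality that $D_0\neq D_1$, which rules out the ``wrong'' case of Lemma \ref{lemma1} and is what makes the induction propagate.
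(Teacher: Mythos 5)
Your proof is correct and takes essentially the same route as the paper: both proceed by induction with Lemma \ref{lemma1} as the base case, apply the inductive hypothesis to the face $v_0,\ldots,v_{n-1}$, and then pin down $v_n$ by looking at a smaller face containing two of the already-determined vertices together with $v_n$. The only cosmetic difference is that you invoke Lemma \ref{lemma1} directly on the $2$-face $\{v_0,v_1,v_n\}$, whereas the paper invokes the inductive hypothesis on the second codimension-one face $\{v_0,\ldots,v_{n-2},v_n\}$; these steps are logically interchangeable.
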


\begin{proof}
	We will use an induction argument for $n$.
	For $n=2$, we already proved in Lemma \ref{lemma1}.
	Suppose that the statement of Lemma \ref{lemma2} holds for $n=k\geq 2$, and consider a $(k+1)$-simplex $\Sigma$ determined by $v_0$, $\cdots$, $v_{k+1}$.
	Let $\Delta$ be the $k$-subsimplex of $\Sigma$ determined by $v_0$, $\cdots$, $v_k$, and $\Delta'$ be the $k$-subsimplex of $\Sigma$ determined by $v_0$, $\cdots$, $v_{k-1}$, $v_{k+1}$.
	If we consider $\Delta$, then we get the following two cases.
	\begin{enumerate}
		\item \label{lemma-case-1}the vertices of $\Delta$ have the form $v_0=(D_0, E)$, $\cdots$, $v_k=(D_k,E)$, or
		\item \label{lemma-case-2}the vertices of $\Delta$ have the form $v_0=(D, E_0)$, $\cdots$, $v_k=(D,E_k)$.
	\end{enumerate}
	In the case (\ref{lemma-case-1}), the vertices of $\Delta'$ have the form
	$$v_0=(D_0, E),\, \cdots,\,v_{k-1}=(D_{k-1},E),\, v_{k+1}.$$
	Since $k\geq 2$, we can assume the existence of $v_1=(D_1,E)$.
	By the induction hypothesis, $v_{k+1}$ must have the form $(D_{k+1},E)$.
	Similarly, in the case (\ref{lemma-case-2}), the vertices of $\Delta'$ have the form
	$$v_0=(D, E_0),\, \cdots,\,v_{k-1}=(D,E_{k-1}),\, v_{k+1}=(D,E_{k+1}).$$
	Therefore, the statement holds when $n=k+1$.
\end{proof}

Let $\bar{D}\subset V$, $\bar{E}\subset W$ be compressing disks, where $\partial \bar{D}$ and $\partial\bar{E}$ intersect transversely in a single point.
Such a pair of disks is called a \textit{canceling pair} of disks for the splitting.
If there is a canceling pair, then we call the splitting \textit{stabilized}.

Let $F$ be a surface and $D$ be a compressing disk of $F$.
If we compress $F$ along $D$, we call the compressed surface \textit{``$F_D$''}. Lemma \ref{lemma3} gives a connection of a vertex in $\WR$ with a stabilized Heegaard splitting.

\begin{lemma}\label{lemma3}
	If $\{\bar{D},\bar{E}\}$ is a canceling pair of a Heegaard splitting $H=(V,W;F)$ and the genus of $F$ is at least two.
	Then, there is a weak reducing pair $(D,E)$ such that $D=\bar{D}$ or $E=\bar{E}$.
\end{lemma}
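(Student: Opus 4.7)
The plan is to produce a compressing disk $D_0\subset V$ whose boundary is disjoint from $\partial\bar{E}$; then $(D_0,\bar{E})$ will serve as the required weak reducing pair with $E=\bar{E}$. The disk $D_0$ will be extracted from the boundary sphere of a regular neighborhood of $\bar{D}\cup\bar{E}$ in $M$.

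First, let $N$ be a regular neighborhood of $\bar{D}\cup\bar{E}$ in $M$. Since $\bar{D}$ and $\bar{E}$ are disks meeting only at a single boundary point, $\bar{D}\cup\bar{E}$ is contractible, so $N$ is a $3$-ball with sphere boundary $\partial N$. The intersection $T:=N\cap F$ is a regular neighborhood in $F$ of the figure-eight $\partial\bar{D}\cup\partial\bar{E}$. Because $\partial\bar{D}$ and $\partial\bar{E}$ meet transversely in exactly one point, each of them is non-separating in $F$ (a separating simple closed curve meets every closed curve in an even number of points), and a regular neighborhood of such a configuration in an orientable surface is a once-punctured torus. Let $c:=\partial T=F\cap\partial N$; this is a single simple closed curve in $F$.

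Next I verify that $c$ is essential in $F$. The curve $c$ separates $F$ into $T$ (a once-punctured torus, not a disk) and its complement $F\setminus\mathrm{int}(T)$, which is a surface of genus $g-1$ with one boundary component. The hypothesis that the genus of $F$ is at least $2$ forces this complement to have genus at least one and hence not be a disk, so $c$ bounds no disk in $F$. Meanwhile, since $\partial N$ is a $2$-sphere and $c$ is a single circle on it, $c$ cuts $\partial N$ into two disks $D_0:=\partial N\cap V$ and $E_0:=\partial N\cap W$, each properly embedded on its respective side of $F$ with boundary $c$. Combined with the essentiality of $c$, the disk $D_0$ is a compressing disk for $F$ in $V$.

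Finally, $\partial D_0=c=\partial T$ lies on the boundary of $T$, while $\partial\bar{E}$ sits in the interior of $T$, so $\partial D_0\cap\partial\bar{E}=\emptyset$. Hence $(D_0,\bar{E})$ is a weak reducing pair with $E=\bar{E}$, proving the lemma. The delicate step is the essentiality of $c$: without the genus hypothesis, the complement of $T$ in $F$ would be a disk, $c$ would bound a disk in $F$, and $D_0$ would fail to be a compressing disk; so the genus bound is genuinely needed.
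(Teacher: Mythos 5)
Your proof is correct and rests on the same key object as the paper's: the simple closed curve bounding a regular neighborhood of the figure-eight $\partial\bar{D}\cup\partial\bar{E}$ in $F$. The mechanism you use to extract the compressing disk is a mild variant --- you take the boundary $2$-sphere of a regular neighborhood $N$ of $\bar{D}\cup\bar{E}$ in $M$ and split it along $F\cap\partial N$, whereas the paper compresses $F$ along $\bar{E}$ (resp.\ $\bar{D}$), observes that $C$ bounds a disk in $F_{\bar{E}}$ (resp.\ $F_{\bar{D}}$), and pushes that disk slightly into the compression body. Both constructions yield the same disk up to isotopy, so the logical content is the same. One thing your writeup does better than the paper's is to make explicit where the genus hypothesis enters: the complement of the once-punctured torus $T$ in $F$ must not be a disk, which is exactly the condition $g(F)\geq 2$; the paper uses this tacitly to ensure the pushed-in disk is essential. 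Your aside that transverse single-point intersection forces both $\partial\bar{D}$ and $\partial\bar{E}$ to be non-separating (hence $T$ is a once-punctured torus rather than a pair of pants) is a useful detail that the paper omits.
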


\begin{proof}
	Let $C$ the boundary of a regular neighborhood of $\partial \bar{D}\cup\partial \bar{E}$ in $F$.
	Then there is a simple closed curve $C'$ in $F$ in the same isotopy class of $C$ in $F$, such that $C'$ bounds a disk in $F_{\bar{E}}$ ($F_{\bar{D}}$ resp.).
	If we push the interior of this disk slightly into $W$ ($V$ resp), we get a disk $E$ ($D$ resp), such that $\bar{D}\cap E=\emptyset$ ($\bar{E}\cap D=\emptyset$ resp).
	If we choose $D=\bar{D}$ ($E=\bar{E}$ resp), then we get the wanted weak reducing pair $(D,E)$ (see Figure \ref{fig-cancelling}.)
\end{proof}

\begin{figure}
	\includegraphics[width=6cm]{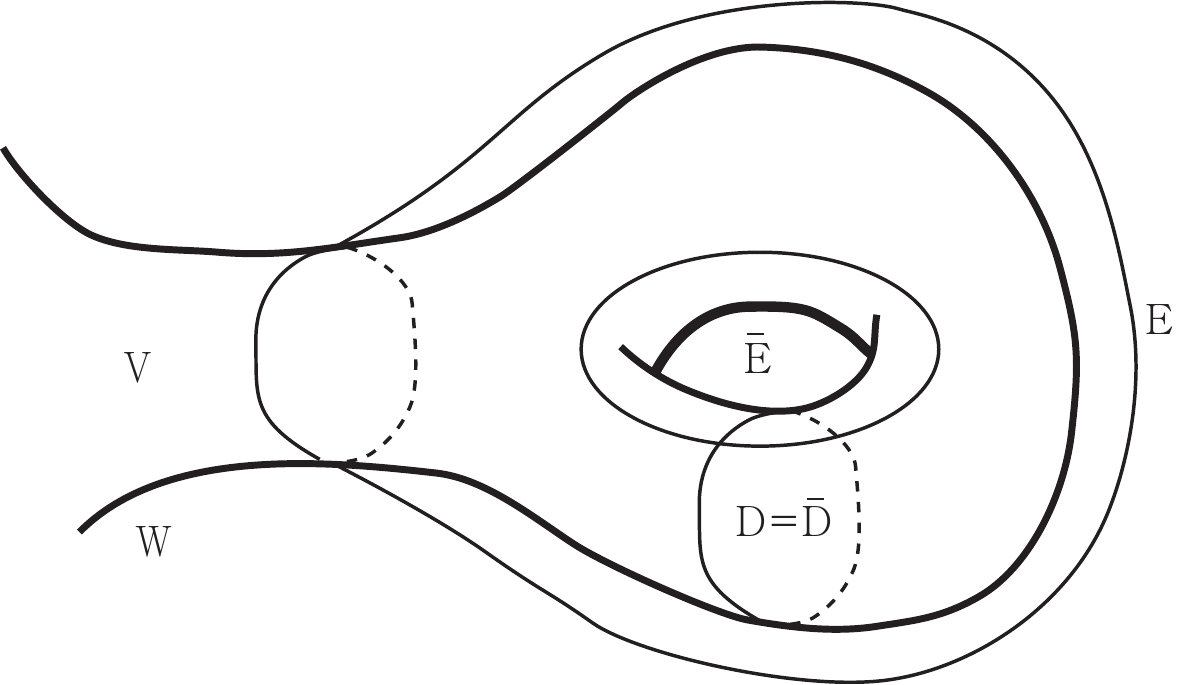}
	\caption{We can take a weak reducing pair $(D,E)$ such that  $D=\bar{D}$. \label{fig-cancelling}}
\end{figure}

\begin{definition}\label{definition4}
	Let $v=(D, E)$ be a vertex in $\WR$.
	\begin{enumerate}
		\item \label{def13-1} If there is a canceling pair $\{\bar{D}, \bar{E}\}$ such that $D=\bar{D}$, then we replace $F$ by $F_D$, and redefine $\WR(F)$ as $\WR(F_D)$.
		\item \label{def13-2} If there is a canceling pair $\{\bar{D}, \bar{E}\}$ such that $E=\bar{E}$, then we replace $F$ by $F_E$, and redefine $\WR(F)$ as $\WR(F_E)$.

	\end{enumerate}
	(Of course, the Heegaard splitting is also changed if one of (\ref{def13-1}) and (\ref{def13-2}) holds since the Heegaard surface is changed.)
	If one of (\ref{def13-1}) and (\ref{def13-2}) holds for a vertex $v$ in $\WR$, we will call this procedure a \textit{destabilization of $\WR$ for the vertex $v$}.
	We can repeat destabilizations of $\WR(F)$ until we cannot destabilize $\WR(F)$ for any vertex in $\WR(F)$.
	We call the resulting Heegaard splitting and $\WR(F)$ \textit{unstabilized}.
	If we need $n$-destabilization steps to make $\WR$ unstabilized, we call the initial Heegaard splitting \textit{$n$-stabilized} and this procedure an \textit{$n$-destabilization}.
\end{definition}

\begin{definition}\label{definition5}
	If there exists $n\in\mathbb{Z}$ such that $\dim(\Delta)\leq n$ for every simplex $\Delta$ in $\WR$ and $n$ is such a smallest integer, then we call $n$ \textit{the dimension of $\WR$}.
\end{definition}

If $\WR$ is $n$-stabilized, then it is easy to see that there are $n$ mutually disjoint non-isotopic essential disks $D_1$, $\cdots$, $D_n$ in $V$, and an essential disk $E$ in $W$ s.t. $\partial E\cap \partial D_i=\emptyset$ for $1\leq i\leq n$ (see Figure \ref{fig-nst}.) 
Therefore we get an $(n-1)$-simplex by $n$-vertices $(D_1, E)$, $\cdots$, $(D_n,E)$ in $\WR$, i.e. the dimension of $\WR$ is at least $n-1$.

\begin{figure}
	\includegraphics[width=6cm]{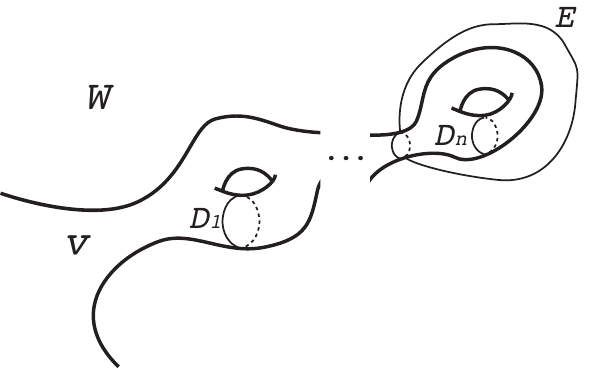}
	\caption{If $\WR$ is $n$-stabilized, then there is an $(n-1)$-simplex determined by $(D_1, E)$, $\cdots$, $(D_n, E)$.\label{fig-nst}}
\end{figure}

\section{The proof of Theorem \ref{theorem-main}\label{section-proof-main-theorem}}

In this section, we will discuss about the proof of Theorem \ref{theorem-main}.

Assume $M$, $H$, and $F$ as in Theorem \ref{theorem-main}.
Since the only irreducible $3$-manifold which has a boundary component isomorphic to a $2$-sphere is $B^3$ and a Heegaard splitting of genus three of $B^3$ is stabilized by using Waldhausen's Theorem (see \cite{Waldhausen}), we do not consider $B^3$ and assume that a boundary component of $M$ is not homeomorphic to a $2$-sphere.

Suppose that there is an $m$-simplex $\Delta$ in $\WR$.
If we compress $F$ along the disks of $\Delta$, we get a union of surfaces.
We will prove that if the splitting $H$ is unstabilzed, then there is no $2$-sphere component determined by some disks of $\Delta$ which come from different compression bodies.

\begin{lemma} \label{lemma-EDsphere} 
	Suppose that $M$ is irreducible $3$-manifold and $H=(V,W;F)$ is an unstabilized Heegaard splitting of $M$.
	Let $\Delta$ be an $m$-simplex determined by the vertices $(D_0,E)$, $\cdots$, $(D_m, E)$.
	Then, $F_{D_0\cdots D_m E}$ cannot have a $S^2$ component determined by $E$ and some $D_i$'s.
	Similarly, if $\Delta$ is determined by $(D,E_0)$, $\cdots$, $(D, E_m)$, then we get the same result for $F_{D E_0\cdots E_m}$.
\end{lemma}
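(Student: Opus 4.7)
The plan is to argue by contradiction: assume that $F_{D_0\cdots D_m E}$ has an $S^2$ component $S$ that uses $E$ and at least one of the $D_i$'s, and produce a reducing sphere for $H$. Since $M$ is irreducible, the result of \cite{SaitoScharlemannSchultens} cited above then forces $H$ to be stabilized, which contradicts the hypothesis. The second assertion of the lemma, for simplices of the form $(D,E_0),\ldots,(D,E_m)$, follows by the same argument with the roles of $V$ and $W$ exchanged.

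First I would describe $S$ concretely: $S = P \cup (\text{caps})$, where $P$ is a planar component of $F \setminus (\partial D_0 \cup \cdots \cup \partial D_m \cup \partial E)$ whose boundary in $F$ has $a \geq 1$ circles coming (with multiplicity) from the $\partial D_i$'s and $b \in \{1,2\}$ circles coming from $\partial E$, each capped by a parallel copy of the corresponding disk. Isotope $S$ into a position transverse to $F$ by pushing the interior of $P$ slightly into $V$; the resulting sphere $S'$ then satisfies $|S' \cap F| = b$, every component of $S' \cap F$ is parallel to $\partial E$ in $F$, and $S' \setminus (S' \cap F)$ decomposes into a single planar piece $\Pi_V \subset V$ with $\chi(\Pi_V) = 2 - b$ together with $b$ disk pieces (copies of $E$) on the $W$-side.

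In the case $b=1$, $\Pi_V$ is a disk in $V$ whose boundary is a single essential simple closed curve on $F$ parallel to $\partial E$, hence a compressing disk for $F$ in $V$. After isotoping $\partial \Pi_V$ onto $\partial E$, the sphere $\Pi_V \cup E$ meets $F$ in the single essential curve $\partial E$, so it is a reducing sphere for $H$, as desired.

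The main obstacle is the case $b=2$, in which $\Pi_V$ is an annulus in $V$ whose two boundary circles are parallel copies of $\partial E$ in $F$. Here, by irreducibility of $M$, the sphere $S'$ bounds a $3$-ball $B' \subset M$, and the plan is to analyze the properly embedded surface $F \cap B' \subset B'$, whose boundary consists of the two parallel circles $S' \cap F$. In the main sub-case $F \cap B' = A_F$, the annulus in $F$ cobounded by these two circles, the region $B' \cap V$ is bounded by the torus $\Pi_V \cup A_F$ and is obtained from $B'$ by drilling out the sub-ball $B' \cap W$ (whose boundary sphere is $E^+ \cup A_F \cup E^-$) along the two disks $E^\pm \subset \partial B'$; hence $B' \cap V$ is a solid torus. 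A meridian disk of this solid torus, after isotoping its boundary into the interior of $\Pi_V$ or of $A_F$, yields in either case a compressing disk for $F$ in $V$ whose boundary is parallel to $\partial E$, and we are reduced to the case $b=1$. The remaining sub-case $F \cap B' = F \setminus A_F$, which has positive genus, is handled by applying the loop theorem inside the simply connected ball $B'$ together with an essential-curve analysis, which again produces a compressing disk with boundary parallel to $\partial E$.
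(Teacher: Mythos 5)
Your proposal diverges from the paper's argument after the initial setup, and the divergence creates unnecessary complications and leaves genuine gaps.

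The paper's proof is substantially shorter and avoids the case split that causes you trouble. Instead of pushing the entire planar piece $P$ (together with the $D$-caps) into $V$, the paper chooses a simple closed curve $\alpha$ in the interior of the planar piece $\bar S$ that separates the set of $D$-caps from the set of $E$-caps on the sphere $S$. (Such an $\alpha$ always exists because $\bar S$ is planar, so any partition of its boundary circles into two nonempty collections is realized by a separating curve.) On $S$, $\alpha$ bounds a disk $B_1$ containing all the $D$-caps and a disk $B_2$ containing all the $E$-caps; pushing $B_1$ into $V$ and $B_2$ into $W$ immediately produces properly embedded disks on both sides with the common boundary $\alpha\subset F$, i.e.\ a reducing sphere, and a short essentiality check finishes the argument. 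There is no $b=2$ case to worry about because the choice of $\alpha$ keeps both $E$-caps on the same side.

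By contrast, your choice to absorb the $D$-caps into the $V$-side piece is what creates the annulus case $b=2$, and your resolution of that case has two genuine gaps. First, the claim that $B'\cap V$ is a solid torus ``because it is obtained from $B'$ by drilling out the sub-ball $B'\cap W$ along two disks'' is not valid as stated: removing a $1$-handle from a ball produces the exterior of a properly embedded arc, which is a solid torus only if that arc is unknotted in $B'$. In the particular sub-case $F\cap B'=A_F$ with $A_F$ a thin annular neighborhood of $\partial E$, the ball $B'\cap W$ is a product $E\times I$, so unknottedness does hold, but this needs to be said; the drilling argument alone does not give it. Second, the dismissal of the sub-case $F\cap B'=F\setminus A_F$ by invoking the loop theorem ``together with an essential-curve analysis'' is a plan, not a proof. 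You would need to specify which curve is compressed, on which side of $F$ the resulting disk lies, why it is essential, and why its boundary can be arranged to be parallel to $\partial E$ (rather than to a $\partial D_i$, which would not feed back into your $b=1$ argument). You should also verify that these really are the only possibilities for $F\cap B'$. The cleaner fix is simply to adopt the paper's choice of $\alpha$, which bypasses all of this.
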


\begin{proof}
	Suppose that there is a $S^2$ component $S$ in $F_{D_0\cdots D_m E}$ determined by $E$ and some $D_i$'s.
	We can assume that 
$$S=\bar{S}\cup (\cup \bar{D}_i)\cup(\cup \bar{E}_j),$$
	where $\bar{S}$ is a planar surface, $\{\bar{D}_i\}_i$ is a set of parallel copies of the $D_i$'s, and $\{\bar{E}_j\}_j$ is a set of parallel copies of $E$.
	Note that $\{\bar{E}_j\}_j$ consists of a disk or two disks.
	Let us consider a simple closed curve $\alpha$ in the interior of $\bar{S}$ which separates $\cup\bar{D}_i$ and $\cup\bar{E}_j$ in $S$.
	$\alpha$ bounds two disks $B_1$ and $B_2$ in $S$, where $B_1$ contains $\cup\bar{D}_i$ and $B_2$ contains $\cup\bar{E}_j$. 
	If we isotope the interior of $B_1$ and $B_2$ slightly, then we can assume that $B_1$ is properly embedded in $V$ and $B_2$ is properly embedded in $W$. 
	Moreover $B_1$ and $B_2$ are essential disks in their compression bodies otherwise each of the $D_i$'s is inessential in $V$ or $E$ is inessential in $W$.
	Therefore, the splitting is reducible, i.e. the manifold is reducible or the splitting is stabilized (see Corollary 3.4.1 of \cite{SaitoScharlemannSchultens}.)
	This contradicts the hypothesis of the lemma.
\end{proof}

\begin{lemma}\label{lemma7}
	Suppose that $M$ is an irreducible $3$-manifold. Let $H=(V,W;F)$ be an unstabilized genus three Heegaard splitting of $M$. Then $\dim(\WR)\leq 1$.
\end{lemma}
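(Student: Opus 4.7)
I plan to argue by contradiction. Suppose $\WR$ contains a $2$-simplex $\Delta$. By Lemma \ref{lemma2}, after possibly exchanging the roles of $V$ and $W$, I may assume $\Delta=\{(D_0,E),(D_1,E),(D_2,E)\}$, where $D_0,D_1,D_2\subset V$ are pairwise disjoint, pairwise non-isotopic essential disks and $E\subset W$ is essential and disjoint from each $D_i$. The key object is the compressed surface $F':=F_{D_0 D_1 D_2 E}$. Since each disk-compression raises Euler characteristic by $2$, one has $\chi(F')=-4+2\cdot 4=4$, and $F'$ inherits six $D_i$-caps (two per $D_i$) and two $E$-caps.

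I would then classify the components of $F'$ by the combinatorics of their caps. Using Lemma \ref{lemma-EDsphere}, the irreducibility convention from Section \ref{section2}, and a recurring ``closure trick'' --- namely, if a connected region $C\subset F\setminus(\bigcup\partial D_i\cup\partial E)$ has the property that every cut curve meeting its closure $\overline{C}$ has both sides in $C$, then $\overline{C}$ is a subsurface of $F$ without boundary, forcing $\overline{C}=F$ and contradicting that $C$ avoids some $\partial D_i\subset F$ --- I would rule out three kinds of sphere components of $F'$: an $S^2$ carrying only $E$-caps (one $E$-cap makes $\partial E$ inessential in $F$, two $E$-caps trigger the closure trick), an $S^2$ carrying caps of both kinds (Lemma \ref{lemma-EDsphere}), and an $S^2$ carrying fewer than three $D_i$-caps (one cap kills essentiality of some $\partial D_i$; two caps from a single $\partial D_k$ trigger the closure trick, and two from distinct $\partial D_k,\partial D_\ell$ cobound an annulus in $F$, forcing $D_k$ and $D_\ell$ isotopic in $V$ by the irreducibility convention). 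Combined with $\chi(F')=4$ and the inequality $3k_D\le 6$ (where $k_D$ is the number of $D$-only sphere components), this forces $F'$ to consist of exactly two $S^2$'s each absorbing exactly three of the six $D_i$-caps, together with other components of total Euler characteristic $0$ which must therefore all be tori.

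The main obstacle is eliminating these tori, and this is where the genus-three hypothesis is used. Each torus carries no $D_i$-cap but must share the two remaining $E$-caps. If a single torus carries both, then its underlying region contains both sides of $\partial E$ and avoids every $\partial D_i$, so the closure trick once again forces $\overline{C}=F$, a contradiction. If two tori carry one $E$-cap each, then each of their underlying regions is bounded only by a single side of $\partial E$ with no $\partial D_i$-touch. When $\partial E$ is non-separating in $F$, the surface $F\setminus\partial E$ is connected with two boundary circles, and no connected proper subsurface of it can be bounded by only one of those circles with no internal boundary --- an immediate contradiction. When $\partial E$ separates $F$ into two pieces $F_1,F_2$, each region must coincide with the corresponding $F_j$ in full, so neither $F_j$ contains any $\partial D_i$, contradicting the existence of the three disks. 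Since every branch leads to a contradiction, no $2$-simplex exists in $\WR$ and $\dim(\WR)\le 1$.
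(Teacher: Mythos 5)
Your argument is correct, and it is genuinely different from the paper's. The paper proves $\dim(\WR)\leq 1$ by a two-level case analysis, first splitting on whether $\partial E$ is separating or non-separating in $F$, then further splitting on the connectivity of $F_{D_0\cdots D_m E}$ and on how many of the $\partial D_i$ are separating; in each branch it locates a forbidden sphere component and invokes Lemma \ref{lemma-EDsphere} to conclude $m\leq 1$. You instead fix $m=2$, compute $\chi(F_{D_0D_1D_2E})=4$, and do global bookkeeping: after ruling out spheres carrying $E$-caps (one $E$-cap gives an inessential $\partial E$; both $E$-caps give, via your closure argument, a closed proper subsurface of $F$ missing the $\partial D_i$'s), mixed spheres (Lemma \ref{lemma-EDsphere}), and spheres with $<3$ $D$-caps (one cap gives an inessential $\partial D_i$; two caps from one $D_k$ again close up; two caps from distinct $D_k,D_\ell$ make $\partial D_k$ isotopic to $\partial D_\ell$, forcing $D_k=D_\ell$ in $V$ by the irreducibility convention of Section \ref{section2}), the Euler count pins down exactly two $D$-only spheres eating all six $D$-caps, with the remaining components tori carrying the two $E$-caps, and you then eliminate those tori cleanly. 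The trade-off is worth noting: your approach is shorter and more uniform, but the paper's branch-by-branch proof is what actually produces the structural data on $1$-simplices recorded in Corollary \ref{corollary-pants} (one $\partial D_i$ separating, the other non-separating, the pair cutting off a pair of pants) and used repeatedly afterward; to recover that corollary one would still have to carry out an $m=1$ analysis along the paper's lines. One small point of care in your write-up: when justifying that a region bounded only by $\partial E$-copies must be a full component of $F\setminus\partial E$, you should state explicitly that a region of $F$ cut along all four curves which meets no $\partial D_i$ is automatically a component of $F$ cut along $\partial E$ alone; that is what makes the non-separating and separating subcases at the end airtight.
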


\begin{proof}
	Let $v=(D_0, E)$ be a vertex in $\WR$, and without loss of generality, suppose that $v$ is contained in an $m$-simplex $\Delta$ determined by $m+1$ vertices $(D_0,E)$, $\cdots$, $(D_m, E)$.
	Now $\partial D_0$, $\cdots$, $\partial D_m$, and $\partial E$ are mutually disjoint and non-isotopic essential curves in $F$.
	(If some $\partial D_i$ is isotopic to $\partial E$ in $F$, then the splitting is reducible. But this leads that $M$ is reducible or the splitting is stabilized.)

	\Case{1} $\partial E$ is non-separating in $F$.

	\Case{1-1} $F_{D_0\cdots D_m E}$ is connected.\\
	This means that $\partial D_0$, $\cdots$, $\partial D_m$, and $\partial E$ are non-separating in $F$.
	Now $F_{D_0 \cdots D_m E}$ is a surface whose genus is at most one.
	If $F_{D_0 \cdots D_m E}$ is a torus, then we get $m=0$.
	Otherwise, $F_{D_0 \cdots D_m E}$ is a $2$-sphere and $m=1$, but this contradicts Lemma \ref{lemma-EDsphere}.

	\Case{1-2} $F_{D_0\cdots D_m E}$ has two or more components.\\
	In this case, there are one or two components in $F_{D_0\cdots D_m E}$ such that each of these components is determined by $E$ and some $D_i$'s.
	Let $\bar{F}_{D_0\cdots D_m E}$ be one of such components and suppose that $E$ and $D_0$, $\cdots$, $D_k$, without loss of generality, determine $\bar{F}_{D_0\cdots D_m E}$.
	If there are two such components, then we choose the one which makes $k$ smallest as $\bar{F}_{D_0\cdots D_m E}$.
	Assume that $D_j$ is superfluous to determine $\bar{F}_{D_0\cdots D_m E}$ if $j>k$.
	We can assume that the genus of $\bar{F}_{D_0\cdots D_m E}$ is at least one by Lemma \ref{lemma-EDsphere}.

	\Case{1-2-1} If $k=0$, then either $\partial E \cup \partial D_0$ cuts $F$ into two twice-punctured tori where $\partial D_0$ is also non-separating in $F$ or $\partial D_0$ is separating in $F$.

	Assume that the former case. Suppose that $m\geq 1$ and let $T$ be the twice-punctured torus component which $\partial D_1$ in contained in.
	Then, $\partial D_1$ is an essential simple closed curve in $T$ which is not $\partial$-parallel.
	If $\partial D_1$ is separating in $T$, then $\partial D_0\cup\partial D_1\cup\partial E$ cuts off a pair of pants $P$ from $F$.
	Hence, $F_{D_0 D_1 E}$ has a $S^2$ component determined by $D_0$, $D_1$, and $E$.
	If we apply Lemma \ref{lemma-EDsphere} to the $1$-subsimplex of $\Delta$ determined by $(D_0, E)$ and $(D_1, E)$, then we get a contradiction.
	If $\partial D_1$ is non-separating in $T$, then $F_{D_0 D_1 E}$ has a $S^2$ component determined by $D_0$, $D_1$, and $E$.
	Also, we get a contradiction similarly. 
	Therefore, we get $m=0$.

	Now we assume the latter case.
	Let $F_{D_0}=F'_{D_0}\cup F''_{D_0}$, where $g(F'_{D_0})=2$ and $g(F''_{D_0})=1$.
	Suppose that $\partial E\subset F''_{D_0}$.
	Since $\partial E$ is not isotopic to $\partial D_0$ in $F$, $\partial E$ must be essential in $F''_{D_0}$, i.e. $F_{D_0 E}$ has a $S^2$-component determined by $D_0$ and $E$.
	If we apply Lemma \ref{lemma-EDsphere} to the $0$-subsimplex of $\Delta$ determined by $(D_0, E)$, then we get a contradiction.
	Therefore, we get $\partial E\subset F'_{D_0}$.
	Moreover, $k=0$ implies $\partial D_1$, $\cdots$, $\partial D_m \subset F''_{D_0}$.
	Since there is only one isotopy class for non-trivial disjoint simple closed curves in $F''_{D_0}$, we get $m\leq 1$.
	(We do not consider the curves isotopic to $\partial D_0$.)
	In particular, $\partial D_1$ must be non-separating in $F$ if it exists.

	\Case{1-2-2} If $k\geq 1$ and at least two of $\partial D_0$, $\cdots$, $\partial D_m$ are separating in $F$.
	Assume that $\partial D_\alpha$ and $\partial D_\beta$ for $0\leq \alpha\neq \beta\leq m$ are separating in $F$. Then, $F_{D_\alpha D_\beta}= F'_{D_\alpha D_\beta} \cup F''_{D_\alpha D_\beta} \cup F'''_{D_\alpha D_\beta}$, where the three components of $F_{D_\alpha D_\beta}$ are genus one surfaces.
	Let $F'_{D_\alpha D_\beta}$ ($F'''_{D_\alpha D_\beta}$ resp.) be the one determined by only $D_\alpha$ ($D_\beta$ resp.) and $F''_{D_\alpha D_\beta}$ be the one determined by both $D_\alpha$ and $D_\beta$.
	Suppose that $\partial E\subset F'_{D_\alpha D_\beta}$ or $F'''_{D_\alpha D_\beta}$.
	In this case, $\partial E$ is essential in $F'_{D_\alpha D_\beta}$ or $F'''_{D_\alpha D_\beta}$, respectively.
	This means that we get a $2$-sphere component in $F_{D_\alpha D_\beta E}$ determined by $E$ and $D_\alpha$, or $E$ and $D_\beta$.
	Therefore, if we apply Lemma \ref{lemma-EDsphere} to the $1$-subsimplex of $\Delta$ determined by $(D_\alpha, E)$ and $(D_\beta,E)$, then we get a contradiction.
	Now we suppose that $\partial E\subset F''_{D_\alpha D_\beta}$.
	We can assume that there are two disks $D_\alpha'$ and $D_\beta'$ in $F''_{D_\alpha D_\beta}$ such that $D_\alpha'$ ($D_\beta'$ resp.) is a parallel copy of $D_\alpha$ ($D_\beta$ resp.).
	If $\partial E$ is inessential in $F''_{D_\alpha D_\beta}$, then $\partial E$ bounds a disk in $F''_{D_\alpha D_\beta}$ containing $D_\alpha'$ and $D_\beta'$ in its interior.
	In this case, $\partial E$ is separating in $F$, which contradicts the assumption of Case 1.
	If $\partial E$ is essential in $F''_{D_\alpha D_\beta}$, then we get a contradiction similarly as when $\partial E\subset F'_{D_\alpha D_\beta}$ or $F'''_{D_\alpha D_\beta}$.

	\Case{1-2-3} If $k\geq1$ and exactly one of $\partial D_0$, $\cdots$, $\partial D_m$, say $\partial D_\alpha$ for some $\alpha$ ($0\leq \alpha \leq m$), is separating in $F$.
	Then, $F_{D_\alpha}=F'_{D_\alpha}\cup F''_{D_\alpha}$, $g(F'_{D_\alpha})=2$, and $g(F''_{D_\alpha})=1$.
	Here, we get $\partial E\subset F'_{D_\alpha}$ by Lemma \ref{lemma-EDsphere}.
	Therefore, there is some $\partial D_j\subset F'_{D_\alpha}$ for $0\leq j\neq \alpha\leq m$ by the assumption $k\geq 1$.
	If we compress $F'_{D_\alpha}$ along $E$, then $(F'_{D_\alpha})_E$ is a torus.
	We can assume that there are three disks $E'$, $E''$, and $D_\alpha'$ in $(F'_{D_\alpha})_E$ where $E'$ ($E''$, $D_\alpha'$ resp.) is a parallel copy of $E$ ($E$, $D_\alpha$ resp.)
	If $\partial D_j$ is inessential in $(F'_{D_\alpha})_E$, then $\partial D_j$ bounds a disk in $(F'_{D_\alpha})_E$ containing (1) $E'$ and $E''$, (2) $E'$ and $D_\alpha'$, (3) $E''$ and $D_\alpha'$, or (4) $E'$, $E''$ and $D_\alpha'$ in its interior.
	In any case, there is a $S^2$-component in $F_{D_\alpha D_j E}$ determined by $D_j$ and $E$ or $D_\alpha$, $D_j$ and $E$.
	Hence, if we apply Lemma \ref{lemma-EDsphere} to the $1$-subsimplex of $\Delta$ determined by $(D_\alpha, E)$ and $(D_j, E)$, then we get a contradiction.
	If $\partial D_j$ is essential in $(F'_{D_\alpha})_E$, then we get a $S^2$-component in $F_{D_\alpha D_j E}$. 
	Therefore, We get a contradiction similarly.
	
	\Case{1-2-4} If $k\geq 1$ and all of $\partial D_0$, $\cdots$, $\partial D_m$ are non-separating in $F$, then we can find two of them $\partial D_\alpha$ and $\partial D_\beta$ for $0\leq\alpha\neq\beta\leq m$ such that $\partial D_\alpha\cup \partial D_\beta$ separate $F$. 
	(Suppose not, i.e. $F_{D_\alpha D_\beta}$ is connected for any choice of $\alpha$ and $\beta$ for $0\leq\alpha\neq\beta\leq m$.
	If $\partial E$ is inessential in $F_{D_\alpha D_\beta}$, then there is a $S^2$-component in $F_{D_\alpha D_\beta E}$ determined by $E$ and one or both of $D_\alpha$ and $D_\beta$.
	Hence, if we apply Lemma \ref{lemma-EDsphere} to the $1$-subsimplex of $\Delta$ determined by $(D_\alpha, E)$ and $(D_\beta,E)$, then we get a contradiction.
	Hence $\partial E$ is essential in $F_{D_\alpha D_\beta}$.
	Since $F_{D_\alpha D_\beta}$ is a torus, $F_{D_\alpha D_\beta E}$ is a $2$-sphere.
	Therefore, we get a contradiction similarly.)
	Now we get $F_{D_\alpha D_\beta}=F'_{D_\alpha D_\beta}\cup F''_{D_\alpha D_\beta}$, where each of $F'_{D_\alpha D_\beta}$ and $F''_{D_\alpha D_\beta}$ is a torus.
	Without loss of generality, assume that $\partial E\subset F'_{D_\alpha D_\beta}$.
	If $\partial E$ is inessential in $F'_{D_\alpha D_\beta}$, then there is a $S^2$-component in $F_{D_\alpha D_\beta E}$ determined by $D_\alpha$, $D_\beta$, and $E$.
	Hence, if we apply Lemma \ref{lemma-EDsphere} to the $1$-subsimplex of $\Delta$ determined by $(D_\alpha, E)$ and $(D_\beta,E)$, then we get a contradiction.
	If $\partial E$ is essential in $F'_{D_\alpha D_\beta}$, then this also contradicts to Lemma \ref{lemma-EDsphere} similarly.

	\Case{2} $\partial E$ is separating in $F$.

	In this case, $F_{E}=F'_{E}\cup F''_{E}$, where $g(F'_{E})=2$ and $g(F''_{E})=1$. By Lemma \ref{lemma-EDsphere}, $\partial D_0$, $\cdots$, $\partial D_m\subset F'_{E}$.

	\Case{2-1} If some $\partial D_\alpha$ for $0\leq \alpha \leq m$ is separating in $F$, then $F_{D_\alpha E}=F'_{D_\alpha E}\cup F''_{D_\alpha E} \cup F'''_{D_\alpha E}$ where each component is a torus.
	Let $F'_{D_\alpha E}$ ($F'''_{D_\alpha E}$ resp.) be the component determined by only $D_\alpha$ ($E$ resp.) and  $F''_{D_\alpha E}$ be the component determined by both $D_\alpha$ and $E$.
	If $m=0$, then proof ends.
	 Now we assume that $m\geq 1$.
	Suppose that there is $\partial D_\beta$ for $0\leq \beta\neq \alpha\leq m$  which inessential in $F_{D_\alpha E}$.
	If $\partial D_\beta\subset F'_{D_\alpha E}$ or $F'''_{D_\alpha E}$, then $\partial D_\beta$ is isotopic to $\partial D_\alpha$ or $\partial E$ respectively, i.e. a contradiction.
	Therefore, $\partial D_\beta\subset F''_{D_\alpha E}$.
	In this case, there is a $S^2$-component in $F_{D_\alpha D_\beta E}$ determined by $D_\alpha$, $D_\beta$, and $E$.
	If we apply Lemma \ref{lemma-EDsphere} to the $1$-subsimplex of $\Delta$ determined by $(D_\alpha, E)$ and $(D_\beta,E)$, then we get a contradiction.
	Therefore, $\partial D_\beta$ is essential in $F_{D_\alpha E}$ for $0\leq \beta\neq \alpha\leq m$.
	Since all components of $F_{D_\alpha E}$ are tori, Lemma \ref{lemma-EDsphere} forces $\partial D_\beta$ to be in $F'_{D_\alpha E}$.
	Moreover, we get $m\leq 1$ since $\partial D_\beta$ is not isotopic to $\partial D_\alpha$ in $F$ and there is only one isotopy class of disjoint essential simple closed curves in $F'_{D_\alpha E}$.
	In particular, $\partial D_\beta$ is non-separating in $F$ if $m=1$.

	\Case{2-2} If all $\partial D_i$ for $0\leq i \leq m$ is non-separating in $F$, then $\partial D_0$, $\cdots$, $\partial D_m$ represent different isotopy classes of non-separating curves in $F'_{E}$.
	(Suppose that $\partial D_\alpha$ is isotopic to $\partial D_\beta$ in $F'_E$ for $0\leq\alpha\neq\beta\leq m$.
	Using this isotopy, we get an annulus which $\partial D_\alpha$ and $\partial D_\beta$ bound in $F'_{E}$, i.e. there is a $S^2$-component in $F_{D_\alpha D_\beta E}$ determined by $D_\alpha$, $D_\beta$, and $E$.
	Hence, if we apply Lemma \ref{lemma-EDsphere} to the $1$-subsimplex of $\Delta$ determined by $(D_\alpha, E)$ and $(D_\beta,E)$, then we get a contradiction.)
	If $m=0$, then the proof ends.
	If $m\geq 1$, then $(F'_{E})_{D_0 D_1}$ must contain a $S^2$-component determined by $D_0$, $D_1$, and $E$, i.e. $F_{D_0 D_1 E}$ does so.
	Hence, if we apply Lemma \ref{lemma-EDsphere} to the $1$-subsimplex of $\Delta$ determined by $(D_0, E)$ and $(D_1,E)$, then we get a contradiction.
\end{proof}

\begin{corollary}\label{corollary-pants}
	Assume $M$, $H$, and $F$ as in Lemma \ref{lemma7}.
	If $(D_0, E)$ and $(D_1, E)$ determine a $1$-simplex in $\WR$, then $\partial D_0$ is separating and $\partial D_1$ is non-separating in $F$ without loss of generality.
	Moreover, $\partial D_0 \cup \partial D_1$ cuts off a pair of pants from $F$, and $\tilde{F}$ does not have compressing disks in $V$ other than those parallel to $D_0$ where $\tilde{F}\subset F$ is obtained from the once-punctured genus two component of $F-\partial D_0$ removing $\partial E$. 
	We get the same result if $(D, E_0)$ and $(D, E_1)$ determine a $1$-simplex in $\WR$.
\end{corollary}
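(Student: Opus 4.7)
My plan is to split the corollary into its three assertions. For the first two --- that one of $\partial D_0, \partial D_1$ is separating and the other non-separating, and that together they cut off a pair of pants from $F$ --- I would simply re-read the case analysis in the proof of Lemma \ref{lemma7}. Among its subcases, only two permit $m\geq 1$: the latter subcase of Case 1-2-1, with $\partial E$ non-separating and $\partial D_0$ separating so that $\partial D_1 \subset F''_{D_0}$; and Case 2-1 with $m=1$, where $\partial E$ and $\partial D_0$ are both separating and $\partial D_1$ is non-separating, lying in the once-punctured torus component of $F-\partial D_0$ disjoint from $\partial E$. Every other subcase forces $m=0$ or produces a $S^2$-component forbidden by Lemma \ref{lemma-EDsphere}. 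In each surviving case $\partial D_1$ is an essential non-boundary-parallel simple closed curve in a once-punctured torus component of $F-\partial D_0$, and cutting such a torus along such a curve yields a pair of pants, giving both the separating/non-separating dichotomy and the pair-of-pants conclusion.

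For the third assertion I would argue by contradiction. Suppose $D' \subset V$ is a compressing disk with $\partial D' \subset \tilde F$ and $D'$ not parallel to $D_0$ in $V$. Because $\tilde F$ is disjoint from $\partial E$, the only obstruction to $(D',E)$ being a weak reducing pair is $\partial D' = \partial E$ in $F$; but then $D' \cup E$ would be a reducing sphere, forcing $H$ to be stabilized via Corollary 3.4.1 of \cite{SaitoScharlemannSchultens} (using $M$ irreducible), against hypothesis. Hence $(D',E)$ is a vertex of $\WR$, and the plan is to promote the configuration to a $2$-simplex $(D_0,E),(D_1,E),(D',E)$ and derive a contradiction with $\dim(\WR)\leq 1$ from Lemma \ref{lemma7}.

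Disjointness is immediate: $\partial D' \subset \tilde F$ misses $\partial D_0$ (a boundary of $\tilde F$) and $\partial D_1$ (which lies in the once-punctured torus component of $F-\partial D_0$). Non-isotopy in $F$: $\partial D' \not\sim \partial D_0$ by assumption, and the decisive check is $\partial D' \not\sim \partial D_1$. If they cobounded an embedded annulus $A \subset F$, then $A \cap \partial D_0$ contains no arcs (as $\partial A \cap \partial D_0=\emptyset$), so it is either empty or all of $\partial D_0$. The empty case places $A$ inside one component of $F - \partial D_0$, contradicting that $\partial A$ straddles $\partial D_0$. In the other case $\partial D_0 \subset A$ is either contractible in $A$ (impossible, as $\partial D_0$ is essential in $F$) or core-parallel, forcing $\partial D_0 \sim \partial D_1$ or $\partial D_0 \sim \partial D'$ in $F$; the first is ruled out by the separating/non-separating parity, the second by our assumption. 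Irreducibility of $M$ then isotopes $D_0, D_1, D'$ into pairwise disjoint position in $V$, yielding the forbidden $2$-simplex. The symmetric statement for $(D,E_0)-(D,E_1)$ follows by exchanging $V$ and $W$. The main obstacle is this annulus/non-isotopy argument; everything else amounts to bookkeeping of the Lemma \ref{lemma7} subcases.
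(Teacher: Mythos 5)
Your proof is correct and follows the same strategy as the paper: for the separating/non-separating dichotomy and the pair-of-pants claim, re-reading the case analysis of Lemma \ref{lemma7} (the latter subcase of Case 1-2-1 and Case 2-1), and for the final assertion appealing to $\dim(\WR)\le 1$. The only difference is one of detail: the paper dismisses the third assertion with ``Since $\dim(\WR)\le 1$, the last statement is obvious,'' whereas you spell out the contradiction --- that a compressing disk $D'\subset V$ with $\partial D'\subset\tilde F$, not parallel to $D_0$, would yield a vertex $(D',E)$ forming a $2$-simplex with $(D_0,E)$ and $(D_1,E)$ --- including the annulus argument showing $\partial D'\not\sim\partial D_1$ in $F$. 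Your expansion is sound and fills in exactly the verifications (non-isotopy of boundary curves, the reducing-sphere case $\partial D'=\partial E$, and the use of irreducibility of $M$ to upgrade disjoint boundaries to disjoint disks) that the paper leaves implicit.
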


\begin{proof}
	In the proof of Lemma \ref{lemma7}, we have two cases when $\WR$ has a $1$-simplex, i.e. $m=1$, Case 1-2-1 and Case 2-1.
	In each case, one of $\partial D_0$ and $\partial D_1$ is separating and the other is non-separating in $F$.
	(In Case 1-2-1, the separating one is $\partial D_0$ and the non-separating one is $\partial D_1$.
	In Case 2-1, the separating one is $\partial D_\alpha$ and the non-separating one is $\partial D_\beta$.)
	Moreover, the separating one cuts off a once-punctured torus from $F$, and the non-separating one is a non-separating simple closed curve in this torus. 
	Therefore, $\partial D_0\cup \partial D_1$ cuts off a pair of pants from $F$ (see Figure \ref{fig-1-simplices}, the left one is when $\partial E$ is non-separating and the right one is when $\partial E$ is separating in $F$.)
	Since $\dim(\WR)\leq 1$, the last statement is obvious.
\end{proof}

\begin{figure}
	\includegraphics[width=6cm]{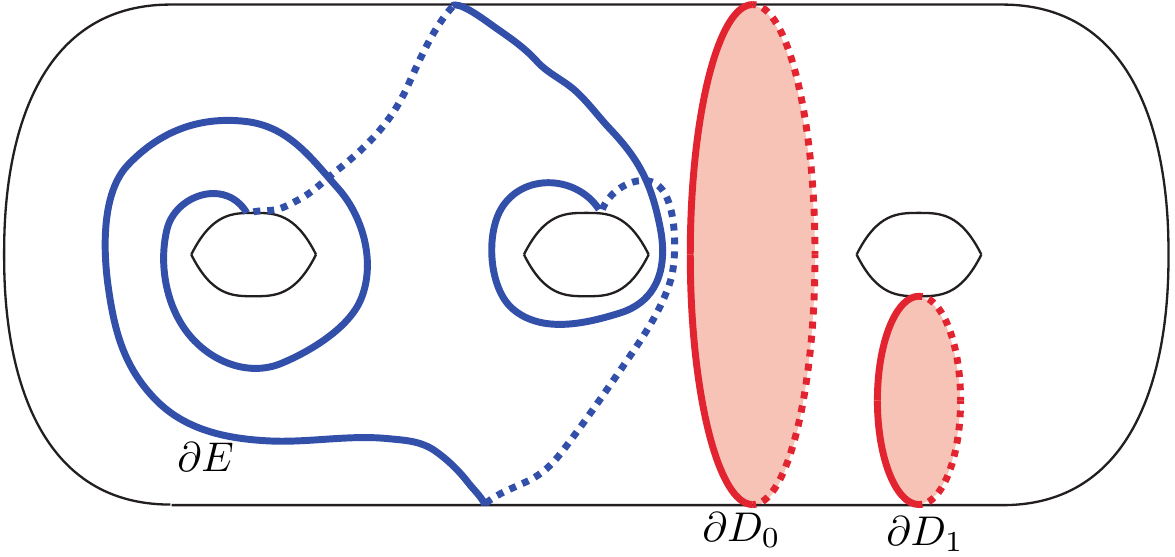}\quad\includegraphics[width=6cm]{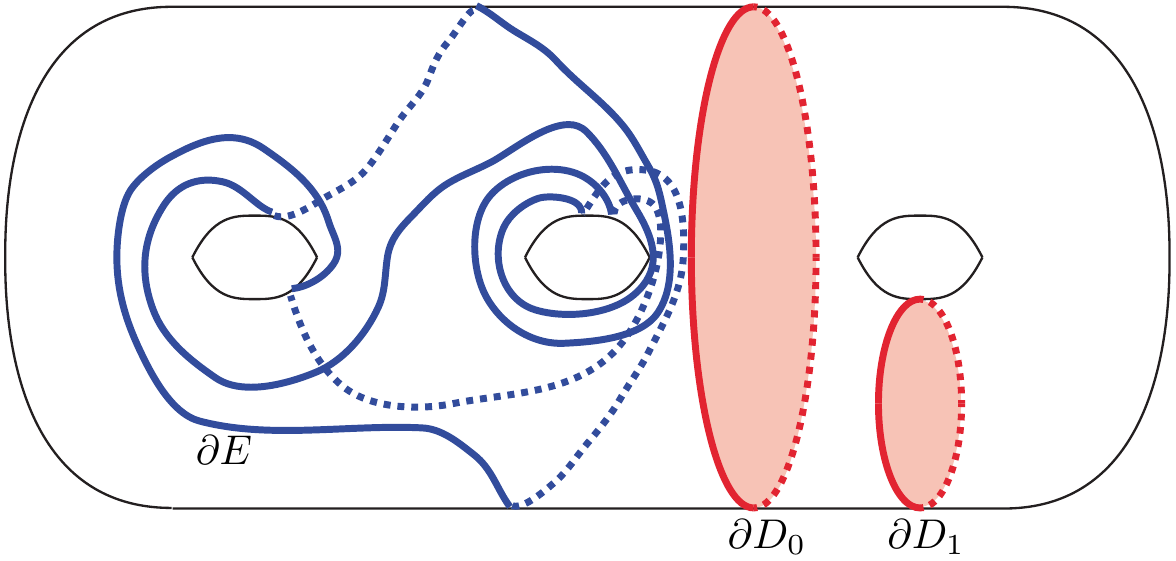}
	\caption{$1$-simplex cases \label{fig-1-simplices}}
\end{figure}

By Lemma \ref{lemma7} we can assume that $\WR$ is a $1$-dimensional graph if it is not a set of vertices.
Suppose that two vertices $v_1=(D_1,E_1)$ and $v_2=(D_2, E_2)$ determine an edge $e$ in $\WR$.
Let us label $e$ ``D'' (``E'' resp.) if $E_1=E_2$ ($D_1=D_2$ resp.) and call $e$ a \textit{D-edge} (an \textit{E-edge} resp.)

\begin{lemma}\label{lemma8}
	Assume $M$, $H$, and $F$ as in Lemma \ref{lemma7}.
	If there are two adjacent edges $e$ and $f$ in $\WR$ such that $e$ is determined by $(D_0, E)$ and $(D_1, E)$ and $f$ is determined by $(D_1, E)$ and $(D_2, E)$, i.e. both edges are labelled ``D'', then $\partial D_1$ is non-separating, and $\partial D_0$, $\partial D_2$ are separating in $F$.
	We get the same result if both edges are labelled E.
\end{lemma}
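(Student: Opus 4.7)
The plan is to use Corollary \ref{corollary-pants} to reduce the lemma to a single claim: that $\partial D_1$ cannot be separating in $F$. Indeed, the corollary applied to $e$ forces exactly one of $\partial D_0, \partial D_1$ to be separating in $F$ and the other non-separating; the corollary applied to $f$ does the same for $\partial D_1, \partial D_2$. So if $\partial D_1$ is non-separating, then both $\partial D_0$ and $\partial D_2$ must be separating, which is exactly the conclusion of the lemma.

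To exclude the remaining configuration, I would assume toward contradiction that $\partial D_1$ is separating. Writing $F \setminus \partial D_1 = T_1 \cup G_1$, where $T_1$ is a once-punctured torus and $G_1$ is a once-punctured genus two surface, the second half of Corollary \ref{corollary-pants} applied to $e$ says that $\partial D_0 \cup \partial D_1$ cuts off a pair of pants from $F$; since $\partial D_0$ is non-separating in $F$, that pair of pants must be $T_1$ opened along $\partial D_0$, so $\partial D_0 \subset T_1$. Applying the same reasoning to $f$ places $\partial D_2 \subset T_1$ as well. I will also use that $D_0 \neq D_2$ in $V$, which follows from $(D_0, E) \neq (D_2, E)$ together with the convention recalled in Section \ref{section2}: since $M$ is irreducible and not $B^3$, distinctness of vertices corresponds to non-isotopy of the underlying disks.

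I would then cut $V$ along the separating disk $D_1$ and let $V_{T_1}$ denote the resulting component whose $\partial_+$ is the closed torus obtained by capping $T_1$ off with a parallel copy of $D_1$. Since $D_0 \cap D_1 = D_2 \cap D_1 = \emptyset$, I can isotope $D_0$ and $D_2$ entirely into $V_{T_1}$, where they become compressing disks of $\partial_+ V_{T_1}$. Now $V_{T_1}$ is a compression body whose $\partial_+$ is a torus, and any such compression body is either a solid torus (in which every compressing disk is a meridian disk) or $T^2 \times I$ (in which $\partial_+$ is incompressible). Either way, $V_{T_1}$ admits at most one isotopy class of compressing disks, so $D_0$ and $D_2$ are forced to be isotopic in $V_{T_1}$, and hence in $V$, contradicting $D_0 \neq D_2$.

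Consequently, $\partial D_1$ is non-separating, and Corollary \ref{corollary-pants} immediately yields the remaining assertions that $\partial D_0$ and $\partial D_2$ are separating. The argument for two adjacent $E$-labelled edges is identical after swapping the roles of $V$ and $W$. I expect the main obstacle to be cleanly recognising that cutting along the separating disk $D_1$ produces a genus one compression body whose compressing disks are unique up to isotopy; once that structural observation is in place, the incompatibility with the distinctness of $(D_0, E)$ and $(D_2, E)$ in $\WR$ produces the contradiction automatically.
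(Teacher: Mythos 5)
Your proposal is correct and follows essentially the same approach as the paper's: both assume toward contradiction that $\partial D_1$ is separating, use Corollary \ref{corollary-pants} to place $\partial D_0$ and $\partial D_2$ in the once-punctured torus side of $F_{D_1}$, cut $V$ along $D_1$, and identify the resulting piece as a solid torus to force $D_0$ and $D_2$ to be isotopic. The only cosmetic difference is in the final identification: the paper compresses the punctured torus along $D_0$ to produce a $2$-sphere and then invokes the absence of sphere components in $\partial_- V$, whereas you directly classify the genus-one compression body as a solid torus or $T^2\times I$ and observe that the latter is ruled out because $D_0$ exists — but these are two phrasings of the same structural fact.
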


\begin{proof}
	By Corollary \ref{corollary-pants}, we have the following two cases.

	\Case{1} $\partial D_0$ and $\partial D_2$ are non-separating, and $\partial D_1$ is separating in $F$.

	\Case{2} $\partial D_0$ and $\partial D_2$ are separating, and $\partial D_1$ is non-separating in $F$.

	We will show that we only get Case 2.
	Suppose that Case 1 holds.
	Let $F_{D_1}=F'_{D_1}\cup F''_{D_1}$, where $g(F'_{D_1})=1$ and $g(F''_{D_1})=2$.
	Let $V'$ be the part of $V$ that $F'_{D_1}$ bounds in $V$.
	If we consider the edge $e$, then $\partial D_0\subset F'_{D_1}$ since $\partial D_0\cup \partial D_1$ cuts off a pair of pants from $F$ by Corollary \ref{corollary-pants}.
	The existence of the edge $e$ implies $D_0\cap D_1=\emptyset$, i.e. $D_0\subset V'$.
	Hence, we can compress $F'_{D_1}$ along the disk $D_0$ in $V'$ and get a $2$-sphere $S$.
	Since $\partial_-V$ does not have a $2$-sphere component, the inside of $S$ in $V$ must be a $3$-ball, i.e. $V'$ is a solid torus and $D_0$ is a meridian disk of $V'$.
	If we use Corollary \ref{corollary-pants} for the edge $f$, then $D_2$ is also a properly embedded disk in $V'$ and both of $D_0$ and $D_2$  must be meridian disks of $V'$.
	Therefore, $D_0$ is isotopic to $D_2$ in $V'$ and we can assume that this isotopy misses $D_1$.
	This means $D_0$ is isotopic to $D_2$ in $V$, which is a contradiction.
\end{proof}

\begin{definition}
	If there is a maximal connected subgraph of $\WR$ whose edges are labelled only D and it consists of two or more edges, we call it a \textit{``D-cluster''}.
	Similarly, we can define an \textit{``E-cluster''}.
	In a D-cluster (an E-cluster resp.), all vertices have the same $E$-disk ($D$-disk resp.). 
\end{definition}

\begin{lemma}\label{lemma-cluster}
	A D-cluster does not contain a loop.
	Moreover, there is only one vertex in a D-cluster which is adjacent to two or more edges in the D-cluster.
	We get the same result for an E-cluster.
\end{lemma}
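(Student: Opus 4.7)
The plan is to exploit the separating/non-separating dichotomy forced by Corollary \ref{corollary-pants} and Lemma \ref{lemma8}. Fix a D-cluster $C$; by definition all its vertices share a common $W$-disk $E$, so we may write them as $(D_i, E)$. Call such a vertex \emph{separating} or \emph{non-separating} according to whether $\partial D_i$ is separating in $F$. Corollary \ref{corollary-pants} says that each edge of $C$ joins a separating vertex to a non-separating one, so $C$ is bipartite with these two parts.

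The key observation is that every separating vertex has degree exactly one in $C$. Indeed, if a separating vertex $(D_0, E)$ were incident to two D-edges in $C$, say the edges to $(D_1, E)$ and $(D_2, E)$, then Lemma \ref{lemma8} applied to the chain $(D_1, E)$--$(D_0, E)$--$(D_2, E)$ (whose hypothesis is met because all three vertices share the disk $E$) would force $\partial D_0$ to be non-separating, a contradiction. So each separating vertex is a leaf of $C$.

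Now I show that $C$ contains a unique non-separating vertex. Suppose for contradiction that $v, v'$ are two distinct non-separating vertices of $C$. Since $C$ is connected and bipartite, any path from $v$ to $v'$ alternates parities and must pass through at least one separating vertex $s$, entering and leaving $s$ along two distinct edges of $C$. This contradicts the degree-one property established above. Hence $C$ contains at most one non-separating vertex. Because a cluster has at least two edges by definition, some vertex of $C$ has degree at least $2$, and by Lemma \ref{lemma8} that vertex must be non-separating; so there is exactly one non-separating vertex $v^{\ast}$ in $C$, and every edge of $C$ is incident to it.

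Consequently $C$ is a star centered at $v^{\ast}$ whose leaves are the separating vertices of $C$. Stars contain no loops, and $v^{\ast}$ is the unique vertex of $C$ adjacent to two or more edges of $C$, which is both conclusions of the lemma. The E-cluster case is identical after interchanging the roles of $V$ and $W$. The whole argument is almost combinatorial once Corollary \ref{corollary-pants} and Lemma \ref{lemma8} are in hand; the main thing to be careful about is verifying that Lemma \ref{lemma8} genuinely applies in the ``two edges at one $S$-vertex'' configuration, which it does because the common disk $E$ is automatically the same throughout a single D-cluster.
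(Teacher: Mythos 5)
Your proof is correct and uses the same essential ingredients as the paper's (Corollary \ref{corollary-pants} to get the separating/non-separating dichotomy and Lemma \ref{lemma8} to rule out two edges meeting at a separating vertex), merely repackaged: you phrase it as ``separating vertices are leaves, so the cluster is a star,'' while the paper phrases it as ``no line segment of length three,'' and both conclusions follow identically. The reorganization is clean, but it is the same argument at bottom.
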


\begin{proof}
	If a D-cluster have a subgraph like $v'-_{e_1}v-_{e_2}w-_{e_3}w'$ where $e_1$ and $e_2$ share a vertex $v$ and $e_2$ and $e_3$ share a vertex $w$ ($v\neq w$), then the boundary of the $D$-disk of $v$ must be non-separating in $F$ by applying Lemma \ref{lemma8} to $v'-_{e_1}v-_{e_2}w$.
	But if we apply Lemma \ref{lemma8} to $v-_{e_2}w-_{e_3}w'$ again, then we get the previous $D$-disk is separating in $F$, which is a contradiction.
	Therefore, a D-cluster cannot have a line segment of length $3$ or more.
	By the definition of $\WR$, there is no loop of length at most two, i.e a D-cluster cannot have a loop.

	Suppose that $\bar{v}$ and $\bar{w}$ are different vertices in a D-cluster such that each of both is adjacent to two or more edges in the D-cluster, i.e. there are different two edges $e_v^1$ and $e_v^2$ ($e_w^1$ and $e_w^2$ resp.) in the D-cluster containing $\bar{v}$ ($\bar{w}$ resp.).
	Since a D-cluster is connected, there is a path $l$ from $\bar{v}$ to $\bar{w}$ in the D-cluster.
	We can assume that $l$ does not contain both $e_v^1$ and $e_v^2$ since there is no loop in a D-cluster.
	Similarly, $l$ does not contain both $e_w^1$ and $e_w^2$.
	Without loss of generality, assume that $e_v^1$ and $e_w^1$ are not contained in $l$.
	If we consider $e_v^1\cup l \cup e_w^1$, then it is a line segment of length at least three in the D-cluster, i.e. a contradiction.
	Therefore, there is only one vertex in a D-cluster which is adjacent to two or more edges in the D-cluster.
\end{proof}

\begin{definition} 
	By Lemma \ref{lemma-cluster}, there is a unique vertex in a D-cluster (an E-cluster resp.) adjacent to two or more edges in the cluster.
	We call it \textit{``the center of a D-cluster (an E-cluster resp.)''}.
	We call the other vertices \textit{``hands of a D-cluster (an E-cluster resp.).''} See Figure \ref{fig-dcluster}.
	Note that if an edge $v-_{e}w$ is contained in a D-cluster (an E-cluster resp.), then one of $v$ and $w$ is the center of the cluster and the other is a hand.
\end{definition}

\begin{figure}
	\includegraphics[width=4.5cm]{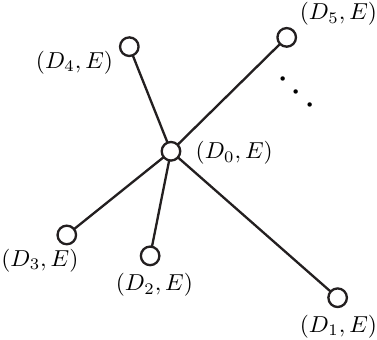}
	\caption{An example of a D-cluster in $\WR$. $(D_0, E)$ is the center and other vertices are hands. \label{fig-dcluster}}
\end{figure}

\begin{lemma}\label{lemma-DorE} 
	Assume $M$, $H$, and $F$ as in Lemma \ref{lemma7} and consider $\WR$.
	Every edge of $\WR$ is contained in some $D$-cluster or $E$-cluster. Moreover, every $D$- or $E$- cluster has infinitely many edges.
\end{lemma}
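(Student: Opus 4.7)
The lemma has two parts---every edge lies in some cluster, and every cluster is infinite---and by the symmetry between D- and E-edges both will follow once I exhibit, for an arbitrary D-edge $e=(D_0,E)-(D_1,E)$, infinitely many other D-edges at the shared vertex $(D_1,E)$. By Corollary \ref{corollary-pants} I may assume $\partial D_0$ is separating and $\partial D_1$ is non-separating in $F$, and by Lemma \ref{lemma8} any additional D-edge at $(D_1,E)$ must come from a compressing disk $D'\subset V$ with $\partial D'$ separating in $F$, disjoint from $\partial D_1\cup\partial E$, and not isotopic to $\partial D_0$.

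My plan is to produce such disks by self-band-summing $D_1$. Push $D_1$ slightly into $V$ on its two sides to obtain disjoint parallel copies $D_1'$ and $D_1''$, so that $\partial D_1'$ and $\partial D_1''$ are the two boundary circles of an annular neighborhood $N(\partial D_1)\subset F$. Examining the two cases of Lemma \ref{lemma7} that actually admit a D-edge---Case 1-2-1 latter case, when $\partial E$ is non-separating, and Case 2-1, when $\partial E$ is separating---the circles $\partial D_1'$ and $\partial D_1''$ always lie in a single connected component $\Sigma_0$ of $F\setminus(N(\partial D_1)\cup\partial E)$, and $\Sigma_0$ has genus at least one. Fix a properly embedded arc $\alpha_0\subset\Sigma_0$ from $\partial D_1'$ to $\partial D_1''$ and an essential, non-peripheral simple closed curve $\gamma\subset\Sigma_0$ with $\gamma\cap\alpha_0\neq\emptyset$; for each $n\in\mathbb{N}$ set $\alpha_n=\tau_\gamma^n(\alpha_0)$ and band-sum $D_1'$ and $D_1''$ through a thin band in $V$ following $\alpha_n$ to obtain a properly embedded disk $D^{(n)}\subset V$.

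Then I would carry out the verifications: $D^{(n)}$ is disjoint from both $D_1$ and $E$ automatically, since the parallel copies and the band lie outside $N(\partial D_1)$ and away from $\partial E$; $\partial D^{(n)}$ is separating in $F$ because $[\partial D^{(n)}]=[\partial D_1']+[\partial D_1'']=2[\partial D_1]\equiv 0$ in $H_1(F;\mathbb{Z}/2)$; and for all but finitely many $n$, $\partial D^{(n)}$ is essential and not isotopic to $\partial D_0$. The main obstacle is showing the $D^{(n)}$ are pairwise non-isotopic, which I plan to handle by noting that $\tau_\gamma$ is supported away from $N(\partial D_1)$ and therefore commutes with the band-sum construction, so $\partial D^{(n)}=\tau_\gamma^n(\partial D^{(0)})$; since $\gamma$ crosses $\alpha_0$ it also crosses $\partial D^{(0)}$ essentially, whence the boundary curves are pairwise non-isotopic in $F$, and then by irreducibility of the compression body the disks themselves are pairwise non-isotopic. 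Hence $(D_1,E)$ has infinitely many hands, and the D-cluster containing $e$ is infinite.
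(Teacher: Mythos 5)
Your construction is essentially the paper's: both produce infinitely many essential separating disks in $V$ disjoint from $D_1$ and $E$ by band-summing two parallel copies of $D_1$ along arcs in the genus-one complement of $\partial D_1\cup\partial E$, observing that this yields new D-edges at the vertex $(D_1,E)$. The paper does the band sum inside the compression body $V'$ (the part of $V$ bounded by $F_{D_1}$), notes each component of $F_{D_1E}$ is a torus via Lemma \ref{lemma-EDsphere}, and asserts with a picture that non-isotopic arcs give non-isotopic disks; you work directly in $V$ and implement ``infinitely many arcs'' via iterated Dehn twists $\tau_\gamma^n$ of a fixed arc.

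There is one place where your verification is not quite tight: the step ``since $\gamma$ crosses $\alpha_0$ it also crosses $\partial D^{(0)}$ essentially.'' This is not automatic from $\gamma$ being essential and non-peripheral in $\Sigma_0$ and meeting $\alpha_0$: in $F$ the curve $\gamma$ could still be isotopic to $\partial D^{(0)}$ itself, or be inessential in $F$ (bounding a disk containing several of the scar disks), and in either case $\tau_\gamma$ fixes the isotopy class of $\partial D^{(0)}$ and produces nothing new. You need to pin down $\gamma$ more carefully---for instance, choose $\gamma$ to be non-separating in $F$, meeting $\alpha_0$ exactly once; then $\gamma\cap T_0$ (where $T_0=N(\partial D_1\cup\alpha_0)$ is the once-punctured torus bounded by $\partial D^{(0)}$) is an arc crossing $\alpha_0$ once and disjoint from $\partial D_1$, hence essential in $T_0$, so no bigon lies inside $T_0$; and $\gamma$ non-separating rules out $\gamma$ being isotopic to the separating curve $\partial D^{(0)}$, so no bigon lies outside either. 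With such a choice the rest of your argument ($\partial D^{(n)}=\tau_\gamma^n(\partial D^{(0)})$ pairwise non-isotopic, separating by the $\mathbb{Z}/2$-homology count, essential and $\neq\partial D_0$ with finitely many exceptions) goes through and gives the same conclusion as the paper's proof.
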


\begin{proof}
	Without loss of generality, assume that there is an edge $e$ labelled D, say $(D_0, E)-_{e}(D_1, E)$.
	By Corollary \ref{corollary-pants}, we can assume that $\partial D_0$ is separating and $\partial D_1$ is non-separating in $F$.
	If we compress $F$ along $D_1$, then $F_{D_1}$ bounds the part of $V$ which misses $D_1$, say $V'$, in $V$.
	Let $D_1'$ and $D_1''$ be two parallel copies of $D_1$ on $F_{D_1}$.
	Let us consider two disks $\bar{D}_1'$ and $\bar{D}_1''$ such that $\bar{D}_1'$ ($\bar{D}_1''$ resp.) is obtained by extending $D_1'$ ($D_1''$ resp.) slightly on $F_{D_1}$ and pushing its interior into $V'$ so that it is properly embedded in $V'$ (see Figure \ref{fig-arc}), and an arc $\alpha$ connecting $\partial\bar{D}_1'$ and $\partial\bar{D}_1''$ on $F_{D_1}$ which misses $\partial\bar{D}_1'$, $\partial\bar{D}_1''$, and $\partial E$ on its interior.
	If we perform the band sum of $\bar{D}_1'$ and $\bar{D}_1''$ along $\alpha$ in $V'$, then we get a properly embedded disk $D_0^\alpha$ in $V'$ which is $\partial$-parallel in $V'$.
	$D_0^\alpha$ is an essential separating disk in $V$ and we can assume that $D_0^\alpha$ misses $D_1$ and $E$.
	Therefore, we get an edge $(D_0^\alpha, E)-_{e_\alpha}(D_1, E)$ in $\WR$.
	We claim that there are infinitely many isotopy classes of $D_0^\alpha$ in $V$.
	If we compress $F_{D_1}$ again along $E$, then $\alpha$ is represented by an arc connecting $\partial\bar{D}_1'$ and $\partial\bar{D}_1''$ on $F_{D_1 E}$ which misses $\partial\bar{D}_1'$, $\partial\bar{D}_1''$, and the boundary of a parallel copy of $E$ (if $\partial E$ is separating in $F_{D_1}$) or two parallel copies of $E$ (if $\partial E$ is non-separating in $F_{D_1}$)  on its interior. 
	Note that the genus of each component of $F_{D_1 E}$ must be one, otherwise we can find a $S^2$-component in $F_{D_1 E}$ determined by $D_1$ and $E$ and we get a contradiction by applying Lemma \ref{lemma-EDsphere} to the $0$-simplex $(D_1, E)$.
	Let $\bar{F}_{D_1 E}$ be the component of $F_{D_1 E}$ containing $\alpha$. Since $g(\bar{F}_{D_1 E})=1$, there are infinitely many isotopy classes of such arcs (see Figure \ref{fig-arc3}, we do not fix the endpoints of the arcs on $\partial\bar{D}_1'$ and $\partial\bar{D}_1''$ when we consider these isotopies.)
	Moreover, if such arcs are non-isotopic, then the resulting disks are also non-isotopic in $V$.
\end{proof}

\begin{figure}
	\includegraphics[width=6.5cm]{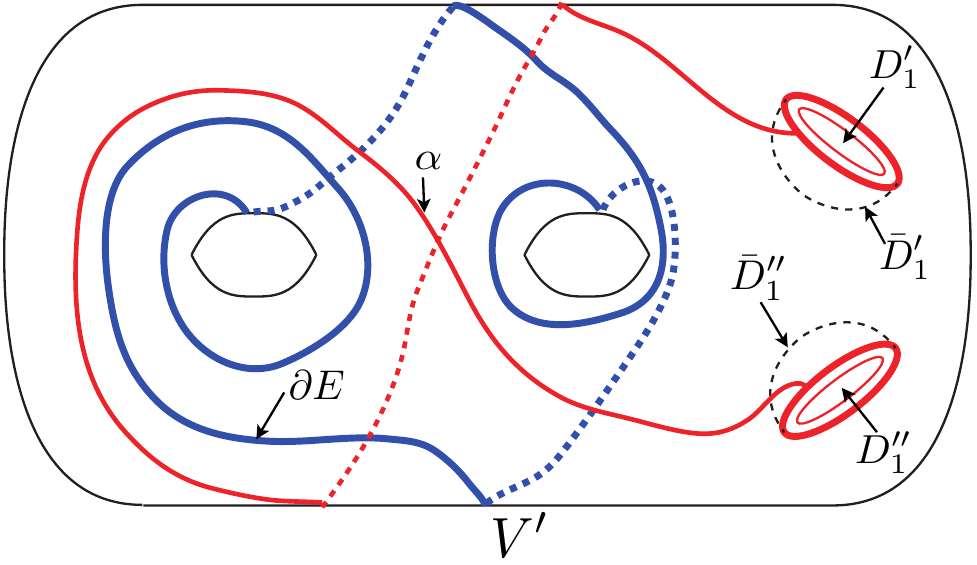}
	\caption{An arc $\alpha$ connecting $\partial\bar{D}_1'$ and $\partial\bar{D}_1''$\label{fig-arc}}
\end{figure}

\begin{figure}
	\includegraphics[width=6.5cm]{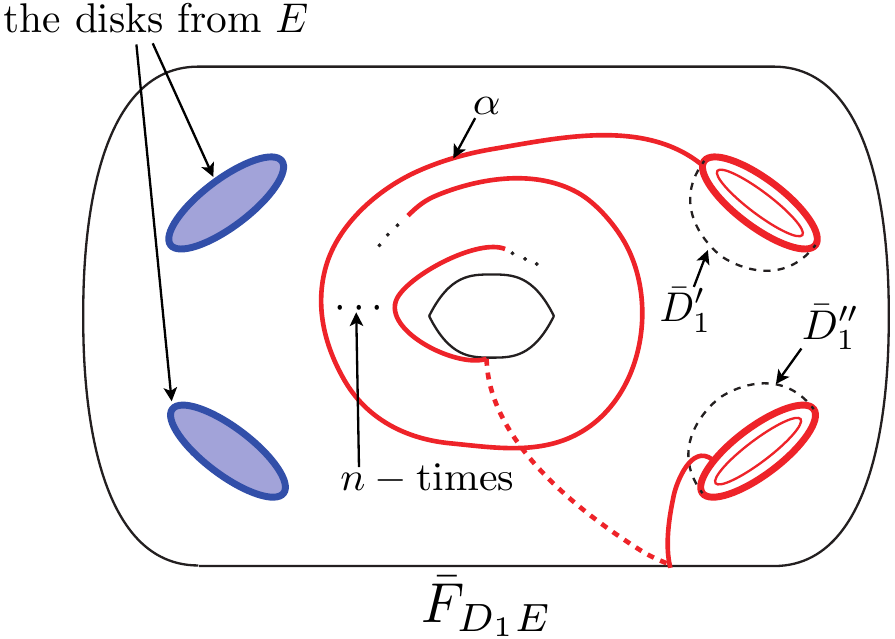}
	\caption{There are infinitely many isotopy classes of $\alpha$. \label{fig-arc3}}
\end{figure}

\begin{lemma}\label{lemma-difffourcurves}
	Assume $M$, $H$, and $F$ as in Lemma \ref{lemma7} and consider $\WR$.
	If there are consecutive two edges with different labels in $\WR$, then either the boundary curves of corresponding four disks are all disjoint or the D-cluster and the E-cluster containing the D-edge and the E-edge respectively have the common center.
	In the latter case, we can replace a separating disk among the four disks by a new separating disk in the same compression body so that the resulting four disks are all disjoint. 
\end{lemma}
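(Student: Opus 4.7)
The plan is to label the two consecutive edges as a path $(D_0,E)-_{e}(D_1,E)-_{f}(D_1,E')$, where $e$ is the D-edge and $f$ is the E-edge (the opposite ordering is handled by symmetry between $V$ and $W$). Five of the six intersections among $\{\partial D_0,\partial D_1,\partial E,\partial E'\}$ vanish automatically from the three weak reducing pairs and the two edge relations, so I only need to analyze $\partial D_0\cap\partial E'$. By Corollary~\ref{corollary-pants}, exactly one of $\{D_0,D_1\}$ is separating in $F$ and exactly one of $\{E,E'\}$ is, yielding four subcases. Inspecting the proof of Lemma~\ref{lemma-DorE}, the center of a D-cluster is precisely the vertex whose D-disk is non-separating, and dually for an E-cluster; thus the D-cluster of $e$ and the E-cluster of $f$ share a common center exactly in the subcase where $D_1$ and $E$ are non-separating, i.e., $D_0$ and $E'$ are the separating disks.

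In each of the three remaining subcases I will show $\partial D_0\cap\partial E'=\emptyset$ by exhibiting a separating curve, among the four boundary curves, whose complement in $F$ contains $\partial D_0$ and $\partial E'$ in different components. The uniform tool is that an essential separating simple closed curve on $F$ cannot be embedded in a once-punctured torus, so once Corollary~\ref{corollary-pants} pins down the positions of $\partial E$ (or $\partial D_1$) inside the designated genus-two piece, a brief Euler-characteristic check forces the other separating disk's boundary into the same piece, while the non-separating one lands in the torus piece; the two are therefore disjoint.

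For the common-center subcase I exploit the construction from the proof of Lemma~\ref{lemma-DorE}: I will replace $D_0$ by a new separating disk $D_0^\alpha\subset V$ obtained by pushing two parallel copies $\bar{D}_1',\bar{D}_1''$ of $D_1$ into the component of $V\setminus D_1$ and band-summing them along an arc $\alpha\subset F_{D_1}$ joining $\partial\bar{D}_1'$ and $\partial\bar{D}_1''$. This disk is essential and separating in $V$, its boundary automatically misses $\partial D_1$, and it misses $\partial E$ (resp. $\partial E'$) as long as $\alpha$ does.

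The main obstacle is thus to choose $\alpha$ disjoint from both $\partial E$ and $\partial E'$, equivalently, to show that the two circles arising from cutting $F$ along $\partial D_1$ lie in the same component of $F\setminus(\partial D_1\cup\partial E\cup\partial E')$. I plan to verify this by direct decomposition: Corollary~\ref{corollary-pants} applied to $f$ places $\partial E$ (non-separating) inside the torus side $T_{E'}$ of $F\setminus\partial E'$ and $\partial D_1$ inside the genus-two side $G_{E'}$, where $\partial D_1$ is non-separating in $G_{E'}$ because it is non-separating in $F$. Cutting $T_{E'}$ along $\partial E$ produces a pair of pants, while cutting $G_{E'}$ along $\partial D_1$ produces a connected genus-one surface with three boundary components, two of which are copies of $\partial D_1$; hence these two copies lie in the same component, the required arc $\alpha$ exists, and $D_0^\alpha$ is the desired replacement.
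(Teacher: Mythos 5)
Your overall strategy is sound and, apart from the bookkeeping, parallels the paper's: both proofs reduce to the single unknown intersection $\partial D_0\cap\partial E'$, both use Corollary~\ref{corollary-pants} to pin down where the boundary curves sit, and both construct the replacement disk by band-summing two parallel copies of a non-separating disk along a carefully chosen arc (you replace $D_0$; the paper symmetrically replaces $E_1$ by an $E_2$ built from copies of $E_0$). The main organizational difference is that you case-split by the separating/non-separating pattern of the four disks, identifying the common-center situation \emph{a priori} via Lemma~\ref{lemma8} (centers have non-separating shared disk), whereas the paper case-splits on which intersections $\partial E_1\cap\partial D_j$, $\partial E_1\cap\partial D_i$ are nonempty and derives the separating pattern as a consequence; your version makes the ``common center iff $D_1$ and $E$ are both non-separating'' equivalence explicit, which is a small clarity gain.

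One detail you should tighten: your stated ``uniform tool'' (an essential separating curve cannot embed in a once-punctured torus) handles the subcases where one of $D_0,E'$ is separating, but it does not apply in the subcase where $D_1$ and $E$ are the separating disks and both $D_0,E'$ are non-separating. There, cutting along $\partial D_1$ alone puts $\partial D_0$ in the once-punctured torus $F''_{D_1}$, but nothing \emph{a priori} keeps the non-separating curve $\partial E'$ out of $F''_{D_1}$. What you actually need is that the two once-punctured tori $F''_{D_1}$ and $F''_E$ cut off by $\partial D_1$ and $\partial E$ are disjoint (equivalently $F''_E\subset F'_{D_1}$); this follows since $\partial D_1\subset F'_E$ forces $F''_E$ into $F\setminus\partial D_1$, and $F''_E\subset F''_{D_1}$ would make $\partial D_1$ and $\partial E$ cobound an annulus, contradicting irreducibility of $H$. (The paper sidesteps this by invoking Lemma~\ref{lemma-EDsphere} on the vertex $(D_1,E')$.) With that patch your replacement construction for the common-center subcase is correct: the arc $\alpha$ you produce in the cut-open $G_{E'}$ misses $\partial E$ and $\partial E'$, so $D_0^\alpha$ is separating, essential, disjoint from $D_1$, and its boundary misses $\partial E$ and $\partial E'$, giving the required four mutually disjoint disks.
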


\begin{proof}
	Let $(D_0, E_0)-_{e}(D_1, E_0)-_{f}(D_1, E_1)$ be the two edges, where $e$ is labelled D and $f$ is labelled $E$. By Corollary \ref{corollary-pants}, we get a situation like Figure  \ref{fig-1-simplices} for each edge, but in the case of $f$, we need to consider one D-disk and two E-disks.
	Since $D_0$, $D_1$, and $E_0$ are all disjoint, and $D_1$, $E_0$, and $E_1$ are also all disjoint by the existence of the edges $e$ and $f$, we will prove that $\partial E_1\cap \partial D_0=\emptyset$ or the both clusters guaranteed by Lemma \ref{lemma-DorE} have the common center.
	Let the separating one be $D_i$ and the non-separating one be $D_j$ between $D_0$ and $D_1$.
	Let $F_{D_i}=F'_{D_i}\cup F''_{D_i}$, where $g(F'_{D_i})=2$ and $g(F''_{D_i})=1$.
	We already know that $\partial E_0\subset F'_{D_i}$ and $\partial D_j\subset F''_{D_i}$ by Corollary \ref{corollary-pants}.

\Case{1} $\partial E_1\cap \partial D_j\neq \emptyset$, i.e. $j=0$. (so $i=1$)\\
	In this case, the existence of the vertex $(D_1, E_1)$ implies that $\partial E_1\cap \partial D_i=\emptyset$.
	Moreover, $\partial E_1\subset F''_{D_i}$ from $\partial E_1\cap \partial D_j\neq \emptyset$.
	But if $\partial E_1$ is non-separating in $F$, then $\partial E_1$ is essential in $F''_{D_i}$, i.e. $E_1$ and $D_i$ determine a $S^2$ component of $F_{D_i E_1}$. 
	Hence, if we apply Lemma \ref{lemma-EDsphere} to the $0$-simplex $(D_i, E_1)$, we get a contradiction.
	If $\partial E_1$ is separating in $F$, then it must be isotopic to $\partial D_i$, it is also a contradiction.

\Case{2} $\partial E_1\cap \partial D_j= \emptyset$.\\
	If $j=0$, then the proof ends. Therefore, assume that $j=1$ (so $i=0$.)
	If we apply Corollary \ref{corollary-pants} to the edge $f$, then one of $\partial E_0$ and $\partial E_1$ is separating and the other is non-separating in $F$.
	Let $\partial E_{k}$ be the separating one and $\partial E_{l}$ be the non-separating one.
	Here, $\partial D_1$ ($\partial E_{l}$ resp.) goes into the genus two component (the genus one component resp.) of $F_{E_k}$ again by Corollary \ref{corollary-pants}.

\Case{2-1} $\partial E_1 \cap \partial D_i \neq \emptyset$.\\
	Suppose that $l=1$, (so $k=0$). Let $F_{E_{k}}=F'_{E_{k}}\cup F''_{E_{k}}$, where $g(F'_{E_{k}})=2$ and $g(F''_{E_{k}})=1$.
	Here, $\partial D_i$ must go into $F'_{E_{k}}$ (if $\partial D_i \subset F''_{E_{k}}$, then it must be parallel to $\partial E_{k}$ since $\partial D_i$ is separating in $F$ and $g(F''_{E_{k}})=1$.)
	Since $\partial E_l\cap \partial D_i\neq \emptyset$ from the assumption of Case 2-1, $\partial E_l$ also goes into $F'_{E_{k}}$.
	But we already know that $\partial D_j=\partial D_1$ is contained in $F'_{E_{k}}$ from the assumption of Case 2.
	If we compress $F'_{E_{k}}$ along $D_j$ and $E_l$, then $(F'_{E_k})_ {D_{j} E_l}$ must have a $2$-sphere component determined by $D_j$, $E_{k}$, and $E_l$ since $\partial D_j$ and $\partial E_l$ are two disjoint non-separating simple closed curves in $F'_{E_k}$.
	Hence, if we apply Lemma \ref{lemma-EDsphere} to the $1$-simpex $f$, then we get a contradiction.
	Therefore we get $l=0$ (so $k=1$), i.e. both $\partial D_1$ and $\partial E_0$ ($\partial D_0$ and $\partial E_1$ resp.) are non-separating (separating resp.) in $F$.
	Let $\mathcal{D}$ ($\mathcal{E}$ resp.) be the D-cluster (the E-cluster resp.) containing the edge $e$ ($f$ resp.)
	Since the D-disk (E-disk resp.) for the center of a D-cluster (an E-cluster resp.) is non-separating in its compression body by Lemma \ref{lemma8}, the vertex $(D_0, E_0)$ ($(D_1, E_1)$ resp.) cannot be the center of $\mathcal{D}$ ($\mathcal{E}$ resp.)
	Therefore, $(D_1, E_0)$ is the common center of both $\mathcal{D}$ and $\mathcal{E}$.

	Now we will prove that there is another edge $(D_1,E_0)-_{g}(D_1,E_2)$ in $\mathcal{E}$ such that $E_2\cap D_0=\emptyset$.
	If we compress $F$ along $D_0$, then $F_{D_0}=F'_{D_0}\cup F''_{D_0}$ where $g(F'_{D_0})=2$ and $g(F''_{D_0})=1$.
	We get $\partial E_0\subset F'_{D_0}$ and $\partial D_1\subset F''_{D_0}$ by applying Corollary \ref{corollary-pants} to the edge $e$.
	If we compress $F'_{D_0}$ again along $E_0$, then we get a torus $\bar{F}_{D_0 E_0}$.
	There are two parallel copies $E_0'$ and $E_0''$ of $E_0$ and one parallel copy $D_0'$ of $D_0$ in $\bar{F}_{D_0 E_0}$.
	If we use the band sum arguments by using an arc connecting $E_0'$ and $E_0''$ missing $\partial E_0'$, $\partial E_0''$, and $\partial D_0'$ on its interior as in the proof of \ref{lemma-DorE}, then we can find an essential separating disk $E_2$ in $W$ such that $E_2\cap D_0=E_2\cap D_1=E_2\cap E_0=\emptyset$.
	Therefore, we get an edge $(D_1, E_0)-_g (D_1, E_2)$ which is contained in $\mathcal{E}$.
	By the existence of the edge $g$, three disks $D_1$, $E_0$, and $E_2$ are all disjoint.
	Since we already know that $D_0$, $D_1$, and $E_0$ are all disjoint by the existence of the edge $e$, $E_2\cap D_0=\emptyset$ means that $D_0$, $D_1$, $E_0$, and $E_2$ are all disjoint.

	\Case{2-2} $\partial E_1 \cap \partial D_i = \emptyset$.\\
	In this case, $\partial E_1 \cap \partial D_0 = \emptyset$, i.e. the proof ends.
\end{proof}

\begin{corollary}\label{corollary-difffourcurves}
	Assume $M$, $H$, and $F$ as in Lemma \ref{lemma7}. 
	There are consecutive two edges with different labels, $(\bar{D}, \tilde{E})-_{e}(\tilde{D}, \tilde{E})-_{f}(\tilde{D}, \bar{E})$ in $\WR$ if and only if the disks $\bar{D}$, $\tilde{D}\subset V$ and $\bar{E}$, $\tilde{E}\subset W$ hold the following conditions.
 		\begin{enumerate}
			\item \label{corollary-difffourcurves-1} Four boundary curves of the disks represent different isotopy classes in $F$.
			\item \label{corollary-difffourcurves-2}One of $\partial \bar{D}$ and $\partial \tilde{D}$ ($\partial \bar{E}$ and $\partial \tilde{E}$ resp.) is separating and the other is non-separating in $F$.	
			\item \label{corollary-difffourcurves-4} $\partial\bar{D}\cup \partial \tilde{D}$ cuts off a pair of pants from $F$, and so does $\partial\bar{E}\cup \partial \tilde{E}$. 
			Moreover, both pairs of pants are disjoint in $F$ if the four disks are all disjoint.
			\item \label{corollary-difffourcurves-3} Either (A) the four disks are all disjoint, or (B) we can replace one separating disk among these disks by another separating disk in the same compression body so that the resulting four disks are all disjoint and satisfying the conditions (\ref{corollary-difffourcurves-1}), (\ref{corollary-difffourcurves-2}), and (\ref{corollary-difffourcurves-4}).
		\end{enumerate}
\end{corollary}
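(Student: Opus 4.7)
The corollary essentially repackages Corollary \ref{corollary-pants} and Lemma \ref{lemma-difffourcurves}, so I plan to handle each direction separately and lean heavily on those results.

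For the forward direction, I assume the two consecutive edges $e$ (a D-edge) and $f$ (an E-edge) exist in $\WR$. The five disjointness relations
\[
\bar D\cap\tilde D=\tilde E\cap\bar E=\bar D\cap\tilde E=\tilde D\cap\tilde E=\tilde D\cap\bar E=\emptyset
\]
are built into the definitions of the three vertices and the two edges. I then apply Corollary \ref{corollary-pants} to each of $e$ and $f$ separately: this immediately yields condition (\ref{corollary-difffourcurves-2}), the pants-cutting-off half of condition (\ref{corollary-difffourcurves-4}), and extra positional information (for example, that $\partial\tilde E$ lies in the genus-two component of $F$ cut along the separating disk among $\{\bar D,\tilde D\}$). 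Condition (\ref{corollary-difffourcurves-1}) follows because $\bar D\neq\tilde D$ and $\tilde E\neq\bar E$ as distinct vertices of $\WR$, while a cross-isotopy such as $\partial\bar D$ isotopic to $\partial\tilde E$ in $F$ would force $H$ to be reducible, contradicting $M$ irreducible and $H$ unstabilized. Condition (\ref{corollary-difffourcurves-3}) is verbatim Lemma \ref{lemma-difffourcurves}.

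The disjointness of the two pants $P_D$ and $P_E$ in case (A) of condition (\ref{corollary-difffourcurves-4}) requires a short combinatorial argument which I foresee as the main technical point. Choose the separating disk of the pair $\{\bar D,\tilde D\}$, say $\bar D$; the pants $P_D$ then lies in the once-punctured torus side $F''_{\bar D}$ of $\partial\bar D$, while Corollary \ref{corollary-pants} applied to edge $e$ places $\partial\tilde E$ in the opposite genus-two side $F'_{\bar D}$. Since the four boundary curves are all disjoint in case (A) and $\partial\bar E$ is non-isotopic to $\partial\bar D$, the pants $P_E$ cut off by $\partial\bar E\cup\partial\tilde E$ also sits on the genus-two side, so $P_D\cap P_E=\emptyset$. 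The symmetric sub-case where $\tilde D$ (rather than $\bar D$) is separating is handled analogously by swapping roles; the sub-cases where the separating disk among $\{\bar E,\tilde E\}$ is played by $\tilde E$ versus $\bar E$ are also symmetric.

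For the backward direction, I assume the four disks satisfy (\ref{corollary-difffourcurves-1})--(\ref{corollary-difffourcurves-3}). The three claimed weak reducing pairs and the two edges of the conclusion exist in $\WR$ if and only if the five disjointness relations displayed above all hold. In case (\ref{corollary-difffourcurves-3})(A) all six pairwise disjointnesses are given, so the five we need are automatic. In case (\ref{corollary-difffourcurves-3})(B), by inspection of Case 2-1 of the proof of Lemma \ref{lemma-difffourcurves} the only pair among the four original disks that is allowed to intersect is the pair $\{\bar D,\bar E\}$ of ``outer'' separating disks, and this is precisely the one pair not required for the five disjointness relations. Hence in either sub-case of (\ref{corollary-difffourcurves-3}) the three vertices and two edges are present in $\WR$, producing the consecutive edges with different labels as claimed. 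The plan thus reduces the entire proof to invoking Corollary \ref{corollary-pants} and Lemma \ref{lemma-difffourcurves}, with the only real work being the location-of-pants verification above.
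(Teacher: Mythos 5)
Your forward direction is essentially the paper's argument and is correct in outline. Your variant argument for the disjointness of $P_D$ and $P_E$ (placing $P_D$ in $F''_{\bar D}$ and showing $\partial\bar D$ cannot lie in the interior of $P_E$, so $P_E$ sits on the genus-two side) is valid, though you leave implicit the step where an essential curve in the interior of a pair of pants must be parallel to a boundary component; the paper instead argues directly that one pants cannot nest inside the other. You also gloss over the final check in case (B) of the forward direction, namely that the replaced quadruple still satisfies conditions (\ref{corollary-difffourcurves-1}), (\ref{corollary-difffourcurves-2}), and (\ref{corollary-difffourcurves-4}); the paper covers this by noting that the edges $e$ and $g$ are again two consecutive edges of different labels.

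The genuine gap is in your backward direction, case (\ref{corollary-difffourcurves-3})(B). You assert that ``by inspection of Case 2-1 of the proof of Lemma \ref{lemma-difffourcurves} the only pair among the four original disks that is allowed to intersect is the pair $\{\bar D,\bar E\}$.'' This is circular: Case 2-1 of Lemma \ref{lemma-difffourcurves} describes the geometry that arises \emph{given} the existence of two consecutive edges in $\WR$ — exactly what you are trying to produce. From the hypotheses alone (four disks satisfying (\ref{corollary-difffourcurves-1})--(\ref{corollary-difffourcurves-3})(B)), you only know that $\bar D\cap\tilde D=\emptyset$ and $\bar E\cap\tilde E=\emptyset$ (from condition (\ref{corollary-difffourcurves-4})) and that \emph{after} replacing one separating disk the quadruple becomes fully disjoint; nothing tells you that the original $\tilde D\cap\bar E$, $\bar D\cap\tilde E$, or $\tilde E\cap\bar E$ vanish, so you cannot form the vertex $(\tilde D,\bar E)$ or the edge $f$ with the original $\bar E$. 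The paper's one-line proof of ($\Leftarrow$) avoids this by simply performing the replacement first: once the four disks are all disjoint, the three vertices and two edges are immediate, and the claimed consecutive-edge configuration is realized with the replaced disk playing the role of $\bar E$. Your attempt to keep the original $\bar E$ does not follow from the hypotheses and should be dropped in favor of the paper's replacement argument.
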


\begin{proof}
	($\Leftarrow$) This is obvious since we can assume the four disks $\bar{D}$, $\tilde{D}$, $\bar{E}$, and $\tilde{E}$ are all disjoint.

	($\Rightarrow$) Suppose that there are consecutive two edges with different labels, $(\bar{D}, \tilde{E})-_{e}(\tilde{D}, \tilde{E})-_{f}(\tilde{D}, \bar{E})$ in $\WR$.
	If a D-disk and an E-disk share their boundaries, then the splitting is reducible, i.e. the manifold is reducible or the splitting is stabilized, this is a contradiction.
	Moreover, $\bar{D}$ and $\tilde{D}$ ($\bar{E}$ and $\tilde{E}$ resp.) cannot share their boundaries by the existence of edge $e$ ($f$ resp.).
	Therefore,  we get the condition (\ref{corollary-difffourcurves-1}).

	If we use Corollary \ref{corollary-pants} for both edges, we get the condition (\ref{corollary-difffourcurves-2}).

	Suppose that the four disks are all disjoint.
	Let $P_D$ ($P_E$ resp.) be the pair of pants which $\bar{D}\cup\tilde{D}$ ($\bar{E}\cup\tilde{E}$ resp.) cuts off from $F$ by Corollary \ref{corollary-pants}.
	If $P_D\cap P_E\neq \emptyset$, then one of both belongs to the interior of the other since the boundary curves are disjoint.
	Assume that $P_E\subset \operatorname{int}(P_D)$ without loss of generality.
	Then, either some component of $\partial P_E$ is inessential in $P_D$ (so also in $F$) or each component of $\partial P_E$ is isotopic to some component of $\partial P_D$ in $F$, any of both cases gives a contradiction.
	Therefore, we get the condition (\ref{corollary-difffourcurves-4}).
	
	If the four disks are all disjoint, then we get the condition (\ref{corollary-difffourcurves-3}).
	Assume that some disk among them intersects the union of the others.
	This means that Case 2-1 of the proof of  Lemma \ref{lemma-difffourcurves} holds.
	Therefore, we can assume that $\bar{D}$, $\tilde{D}$, and $\tilde{E}$ are all disjoint and $\tilde{D}$, $\tilde{E}$, and $\bar{E}$ are all disjoint, but $\bar{E}$ intersects $\bar{D}$.
	Moreover, we can replace $\bar{E}$ by another separating disk $\bar{E}'$ in $W$ so that we can make an edge $(\tilde{D},\tilde{E})-_g(\tilde{D},\bar{E}')$ and $\bar{D}$, $\tilde{D}$, $\tilde{E}$, and $\bar{E}'$ are all disjoint as in the proof.
	We can check that this four disks satisfy the conditions (\ref{corollary-difffourcurves-1}), (\ref{corollary-difffourcurves-2}), and (\ref{corollary-difffourcurves-4}) since the edges $e$ and $g$ are also consecutive two edges with different labels.
\end{proof}

We can imagine something like Figure \ref{fig-1-3} if there exist four disks satisfying the four conditions of Corollary \ref{corollary-difffourcurves} and they are all disjoint. The thick curves are separating and the thin curves are non-separating in $F$.

\begin{figure}
	\includegraphics[width=8cm]{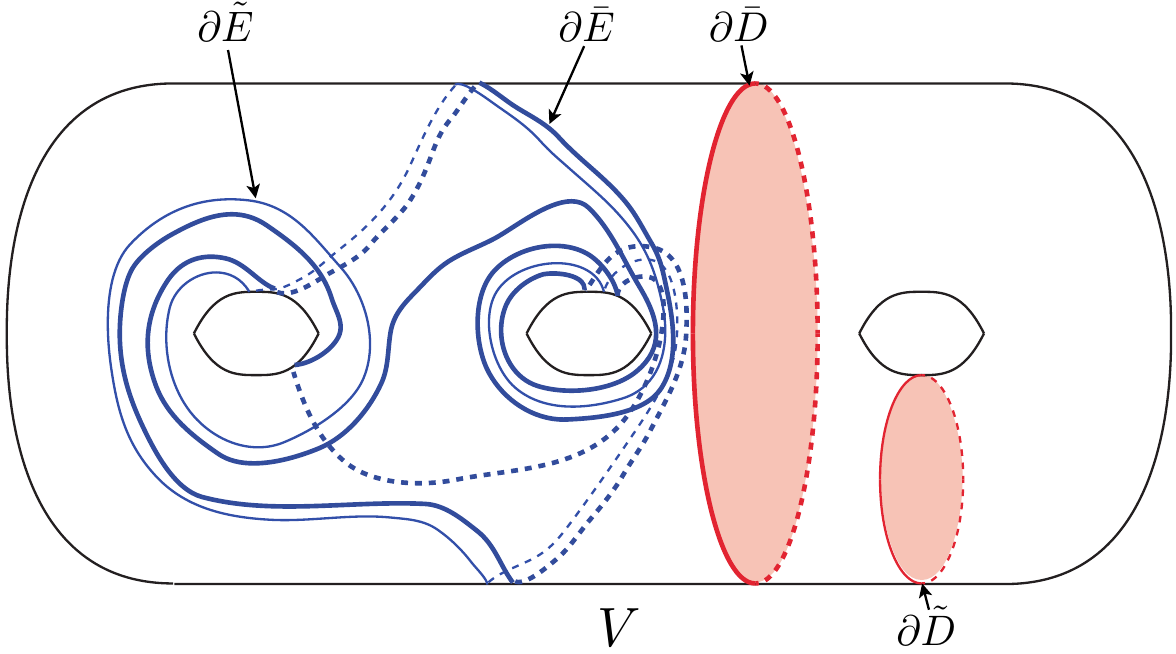}
	\caption{An example of the boundaries of four disks satisfying the four conditions of Corollary \ref{corollary-difffourcurves}\label{fig-1-3}}
\end{figure}

Now we introduce the key theorem of this article. 
The idea of this theorem is to isolate one cluster from another cluster in $\WR$. 

\begin{theorem}\label{theorem-main-1}
	Let $M$ be an irreducible $3$-manifold and $H=(V,W;F)$ be an unstabilized genus three Heegaard splitting of $M$.
	Suppose that there is no choice of four disks $\bar{D}$, $\tilde{D}\subset V$ and $\bar{E}$, $\tilde{E}\subset W$ which satisfy the four conditions of Corollary \ref{corollary-difffourcurves}.
	If we can choose two weak reducing pairs $(D_0, E_0)$ and $(D_1, E_1)$ such that $\partial D_0 \cap \partial E_1 \neq \emptyset$ and $\partial D_1 \cap \partial E_0\neq \emptyset$ up to isotopy, then $H$ is critical.
\end{theorem}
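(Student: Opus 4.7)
The plan is to verify the hypothesis of Proposition \ref{prop-Bachman}, i.e.\ to show that $\WR$ is disconnected under the assumptions of the theorem; I will do this by placing the two given weak reducing pairs $(D_0,E_0)$ and $(D_1,E_1)$ in different connected components of $\WR$, at which point Bachman's criterion delivers criticality.

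The first and crucial step is to translate the disk-theoretic hypothesis into a combinatorial statement about $\WR$. The contrapositive of Corollary \ref{corollary-difffourcurves} says that the non-existence of a four-disk configuration satisfying its four conditions forces $\WR$ to contain no two consecutive edges with different labels. Combining this with $\dim\WR\le 1$ from Lemma \ref{lemma7}, the fact that every edge of $\WR$ lies in some $D$- or $E$-cluster by Lemma \ref{lemma-DorE}, and the tree-with-unique-center structure of a cluster from Lemma \ref{lemma-cluster}, I can conclude that every connected component of $\WR$ is either a single vertex, a single $D$-cluster, or a single $E$-cluster. In particular, any two distinct vertices lying in the same component must either share the same $D$-disk (when the component is an $E$-cluster) or share the same $E$-disk (when the component is a $D$-cluster).

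With this structural description in hand, the rest is a short case analysis. Suppose for contradiction that $(D_0,E_0)$ and $(D_1,E_1)$ lie in the same component of $\WR$. If $D_0=D_1$, then $\partial D_1\cap\partial E_0=\partial D_0\cap\partial E_0=\emptyset$ because $(D_0,E_0)$ is a weak reducing pair, contradicting $\partial D_1\cap\partial E_0\neq\emptyset$. Symmetrically, $E_0=E_1$ contradicts $\partial D_0\cap\partial E_1\neq\emptyset$. The remaining possibility, that the two pairs coincide as vertices, violates both intersection hypotheses at once. Hence $(D_0,E_0)$ and $(D_1,E_1)$ lie in distinct components, so $\WR$ is disconnected and Proposition \ref{prop-Bachman} yields that $F$ is critical.

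The step I expect to be the main obstacle is the structural claim in the second paragraph: I need to be careful that the absence of consecutive edges of different labels, together with the earlier lemmas, genuinely rules out all hybrid shapes and pins each connected component down to one of the three listed types (in particular that no component can be a lone edge outside a cluster, thanks to Lemma \ref{lemma-DorE}). Once that bookkeeping is in place, the disconnectedness of $\WR$ and thus the criticality of $F$ follow essentially by inspection.
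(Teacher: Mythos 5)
Your proof is correct and is essentially the same as the paper's: both invoke Proposition \ref{prop-Bachman} by showing the two given weak reducing pairs lie in different components of $\WR$, using the cluster structure from Lemmas \ref{lemma-DorE} and \ref{lemma-cluster} together with the contrapositive of Corollary \ref{corollary-difffourcurves} to rule out any connecting path. Your observation that every component of $\WR$ must be a single vertex, a single $D$-cluster, or a single $E$-cluster is a tidy repackaging of the three-case analysis the paper carries out directly on the clusters containing $(D_0,E_0)$ and $(D_1,E_1)$.
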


\begin{proof}
	If $(D_0, E_0)$ and $(D_1, E_1)$ represent the same vertex in $\WR$, then we get $D_0=D_1$.
	But $D_0\cap E_0=\emptyset$ leads $D_1\cap E_0=\emptyset$, which contradicts the assumption of this theorem.
	Hence, $(D_0, E_0)$ and $(D_1, E_1)$ are different vertices in $\WR$.
	If $\dim(\WR)=0$, then the proof ends by Proposition \ref{prop-Bachman}.
	Therefore, assume that $\dim(\WR)=1$ by Lemma \ref{lemma7}.
	We will prove that there is no path connecting $(D_0, E_0)$ and $(D_1, E_1)$ in $\WR$ to use Proposition \ref{prop-Bachman}.
	If there is a path from $(D_0, E_0)$ to $(D_1, E_1)$ in $\WR$, then each of both vertices belongs to some edge.
	Hence, Lemma \ref{lemma-DorE} forces each of both vertices to be contained in some D- or E- cluster.
	Now we get the three cases.

	\Case{1} $(D_0, E_0)$ and $(D_1, E_1)$ belong to the same D-cluster (E-cluster resp.).\\
	In this case, $E_0 = E_1$ ($D_0=D_1$ resp.).
	But the weak reducing pair $(D_0, E_0)$ means that $\partial D_0\cap \partial E_0=\emptyset$, i.e. $\partial D_0\cap \partial E_1=\emptyset$ ($\partial D_1\cap \partial E_0=\emptyset$ resp.), which contradicts the assumption of this theorem.

	\Case{2} $(D_0, E_0)$ belongs to some D-cluster (E-cluster resp.) and $(D_1, E_1)$ belongs to another D-cluster (E-cluster resp.)\\
	Suppose that there is a path connecting two vertices, $(D_0, E_0)-_{e_1}-\cdots -_{e_n}(D_1, E_1)$ in $\WR$.
	This path must contain an edge labelled E (D resp.) otherwise two clusters are united, but this goes to Case 1 and leads a contradiction.
	Therefore, we can find a line segment of length two in $\WR$ whose edges are labelled differently in the union of the path and these two clusters.
	This line segment gives the four disks satisfying the four conditions of Corollary \ref{corollary-difffourcurves}, which contradicts the assumption of this theorem.

	\Case{3} $(D_0, E_0)$ belongs to  some D-cluster (E-cluster resp.) and $(D_1, E_1)$ belongs to some E-cluster (D-cluster resp.).\\
	Suppose that there is a path connecting two vertices.
	Since two clusters have different labels, we can find a line segment of length two in $\WR$ whose edges are labelled differently in the union of the path and these two clusters, this gives a contradiction as in Case 2. 
\end{proof}

Now we will prove Theorem \ref{theorem-main}.
Let $M$, $H$, $F$, $D_0$, $D_1$, $E_0$, and $E_1$ be those in Theorem \ref{theorem-main}. 

We divide the proof into the two cases.

\Case{1} There is no weak reducing pair whose two disks are separating in their compression bodies.\\
If there is a choice of four disks satisfying the four conditions of Corollary \ref{corollary-difffourcurves}, 
then we can assume that these four disks are all disjoint, i.e. the two separating disks among them correspond to a weak reducing pair, which contradicts the assumption of Case 1.
Therefore, there is no choice of four disks satisfying the four conditions of Corollary \ref{corollary-difffourcurves}.
Hence, we can use Theorem \ref{theorem-main-1}.
This completes the proof.

\Case{2} There is a weak reducing pair whose two disks are separating in their compression bodies.

We will prove that we cannot choose four disks $\bar{D}$, $\tilde{D}\subset V$, $\bar{E}$, $\tilde{E}\subset W$ satisfying the four conditions of Corollary \ref{corollary-difffourcurves}.

Suppose that such four disks exist and assume that $\bar{D}\subset V$ and $\bar{E}\subset W$ are separating disks among these four disks.
(So $\tilde{D}$ and $\tilde{E}$ are non-separating in their compression bodies.) 

Since we can assume $\bar{D}\cap\bar{E}=\emptyset$, $\partial \bar{D}\cup \partial \bar{E}$ cuts $F$ into three pieces $F'$, $F''$, and $F'''$, where $F'$ ($F'''$ resp.) is the once-punctured torus determined by only $\partial \bar{D}$ ($\partial \bar{E}$ resp.) in $F$ and $F''$ is the twice-punctured torus determined by both $\partial \bar{D}$ and $\partial \bar{E}$ in $F$.
Since the Heegaard splitting $H$ must be irreducible (otherwise, $M$ is reducible or the splitting is stabilized), we can ignore the possibility that $\partial \bar{D}$ is isotopic to $\partial \bar{E}$ in $F$.
$\bar{D}$ ($\bar{E}$ resp.) cuts off a solid torus or $T^2\times I$ from $V$ ($W$ resp.) where $T^2$ is a torus.
By the assumption of this theorem, we can assume that at least one of $\bar{D}$ and $\bar{E}$, say $\bar{D}$ without loss of generality, cuts off $T^2\times I$ from $V$, say $V'$.
Let $T$ be the torus component of $\partial_-V$ which realizes the product $V'$.
Our assumption means that $V'$ is the part of $V$ which $F'\cup \bar{D}$ bounds in $V$.

Since $\partial \bar{D} \cup \partial \tilde{D}$ cuts off a pair of pants from $F$ from Corollary \ref{corollary-difffourcurves}, $\partial \tilde{D}$ must be a non-separating essential simple closed curve in the interior of $F'$, i.e. $\tilde{D}$ is a compressing disk for $F'=\partial V'\cap \partial_+V$.
Since $\bar{D}$ is a separating disk in $V$ and $\tilde{D}$ is disjoint from $\bar{D}$, we get $\tilde{D}\subset V'$, i.e. $F'$ is compressible in $V'\cong T^2\times I$, this is a contradiction 

Therefore, we can use Theorem \ref{theorem-main-1}. This completes the proof.

\section{The proof of Corollary \ref{corollary-main}\label{section-proof-corollary-main}}
In this section, we will prove Corollary \ref{corollary-main}. First, we consider the following lemmas.

\begin{lemma}\label{lemma-corollary}
	Let $M$ be a closed $3$-manifold, $H$ be a genus three Heegaard splitting of $M$, $F$ be the Heegaard surface. 
	$M$ has a weak reducing pair such that the two disks are separating in their handlebodies and the boundaries of both disks are not isotopic in $F$ if and only if $M=M_1\cup_T M_2$, where each $M_i$ has a genus two Heegaard splitting for $i=1,2$, $T$ is a torus, and $H$ can be represented as an amalgamation of these genus two splittings along $T$.
\end{lemma}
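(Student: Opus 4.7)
The plan is to prove both directions of the biconditional, with the bulk of the work lying in the forward direction.

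For $(\Leftarrow)$: I would invoke the standard amalgamation construction. When $F$ is the amalgamation of the two genus two Heegaard surfaces $F_1,F_2$ along the torus $T$, two 1-handles are attached on opposite sides of $F$, both running through a regular neighborhood of $T$. The cocores of these 1-handles are separating disks $\bar D\subset V$ and $\bar E\subset W$; they lie on opposite sides of $F$ and hence are disjoint, and their boundaries encircle distinct tubes on $F$, so they are non-isotopic. Thus $(\bar D,\bar E)$ is the desired weak reducing pair.

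For $(\Rightarrow)$: Starting with the given weak reducing pair $(\bar D,\bar E)$, first I would unpack the combinatorial picture on $F$ and inside $V,W$. Since $V$ is a genus three handlebody, the separating disk $\bar D$ splits $V$ into a solid torus $V_D^*$ and a genus two handlebody $V_{\bar D}^*$, and $\bar E$ splits $W$ analogously into $W_E^*$ and $W_{\bar E}^*$. Because $V_D^*$ and $W_E^*$ are solid tori, each of $\partial\bar D,\partial\bar E$ bounds a once-punctured torus on $F$ (call them $F_D,F_E$), and since $\partial\bar D,\partial\bar E$ are disjoint and non-isotopic, an Euler characteristic count forces the third piece of $F\setminus(\partial\bar D\cup\partial\bar E)$ to be a twice-punctured torus $F_M$.

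Next I would set $T:=\bar D\cup F_M\cup \bar E$; with $\chi(T)=1+(-2)+1=0$ and $T$ closed and embedded, $T$ is a torus. It separates $M$ into $M_1:=V_D^*\cup_{F_D}W_{\bar E}^*$ and $M_2:=V_{\bar D}^*\cup_{F_E}W_E^*$, each with $\partial M_i=T$. Then I would construct a genus two Heegaard splitting of $M_1$ explicitly: take $\Sigma_2$ to be a parallel pushoff of $\partial W_{\bar E}^*$ slightly into $W_{\bar E}^*$; let $W_{M_1}$ be the side of $\Sigma_2$ not meeting $\partial W_{\bar E}^*$ (a genus two handlebody), and let $V_{M_1}:=M_1\setminus\operatorname{int}(W_{M_1})$, which decomposes as $V_D^*$ glued to the collar $\partial W_{\bar E}^*\times[0,\epsilon]$ along $F_D$. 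A meridian disk of $V_D^*$, extended outward through this collar, is a compressing disk of $\Sigma_2$ in $V_{M_1}$ whose compression yields a torus isotopic to $T$; combined with the Euler characteristic $\chi(V_{M_1})=-1$, this shows $V_{M_1}$ is a compression body with $\partial_+=\Sigma_2$ and $\partial_-=T$. The splitting of $M_2$ is constructed symmetrically.

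The last step, which I expect to be the main obstacle, is to verify that $H$ itself is the amalgamation of these two genus two splittings along $T$. The key point is to identify the two amalgamation 1-handles in the reconstructed $F$ with neighborhoods of arcs whose cocores are precisely $\bar D$ and $\bar E$; equivalently, simultaneous compression of $F$ along $\bar D\cup\bar E$ should produce a surface whose distinguished component is isotopic to $T$, with the remaining components being parallel tori bounding the solid tori $V_D^*$ and $W_E^*$. Once the compression body structures of $V_{M_1}$ and $V_{M_2}$ are pinned down, the amalgamation follows from the general construction, but the detailed bookkeeping that links these tubes back to the solid tori requires careful tracking.
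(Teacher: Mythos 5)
Your proof has the right skeleton in both directions, and your torus $T=\bar D\cup F_M\cup\bar E$ agrees up to isotopy with the paper's thin surface. However, there is a concrete error that runs through both halves of your argument.

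In the $(\Leftarrow)$ direction you claim that the cocores of the two amalgamation $1$-handles are the separating disks $\bar D,\bar E$. This is wrong: the cocore of a $1$-handle is a \emph{meridian} disk of that tube, and its boundary is a non-separating curve on the genus-three surface $F$ (cutting $F$ along it yields a connected genus-two surface). So the cocores cannot serve as the separating disks demanded by the lemma. The disks you actually want are the ``belt'' disks lying just outside each tube: curves $C_D,C_E\subset T$ bounding small disks in $T$ that contain the respective pair of $1$-handle feet, pushed to $\partial_+V_-$ and $\partial_+W_+$ and capped off inside those compression bodies. Those disks are separating (each cuts off the solid torus consisting of the tube plus a collar), disjoint, and mutually non-isotopic, which is exactly what the paper's proof constructs. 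The same confusion reappears in your final paragraph of $(\Rightarrow)$, where you try to identify the amalgamation $1$-handles with ``arcs whose cocores are precisely $\bar D$ and $\bar E$''; the cocores should be the meridian disks $D',E'$ of the two solid tori, not $\bar D,\bar E$.

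For $(\Rightarrow)$ your route is genuinely different from the paper's, and more laborious. You build the genus-two Heegaard splittings of $M_1,M_2$ by hand (parallel pushoffs of $\partial W_{\bar E}^*$ and $\partial V_{\bar D}^*$) and then must verify that $V_{M_1},V_{M_2}$ are compression bodies with $\partial_-=T$ and that their amalgamation recovers $F$; you correctly flag this bookkeeping as the hard step, and the Euler-characteristic count alone does not finish it. The paper instead chooses meridian disks $D'\subset V'$ and $E'\subset W'$ of the two solid tori cut off by $\bar D$ and $\bar E$; since these are disjoint from $\bar D,\bar E$ and from each other, $(D',E')$ is a weak reducing pair of \emph{non-separating} disks with $F_{D'E'}$ isotopic to $T$. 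Weak reduction along $(D',E')$ produces a generalized Heegaard splitting with two genus-two thick surfaces and the torus $T$ as the unique thin surface, and since amalgamation is inverse to weak reduction, $H$ is exactly the amalgamation of the resulting genus-two splittings along $T$. This sidesteps all the direct compression-body verification you anticipate needing. If you want to keep your direct construction, the missing step is precisely to exhibit $D'$ and $E'$ as the compressing disks of $\Sigma_2$ and its mirror and observe that cutting along them leaves $T\times I$; once you have that, the amalgamation statement is the standard one.
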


\begin{proof}
	$(\Leftarrow)$
	Let the Heegaard splitting of $M_1$ ($M_2$ resp.) be $(V_+, W_+;F_1)$ ($(V_-, W_-;F_2)$ resp.), and $W_+\cap V_-=T$.
	Since a compression body with non-empty minus boundary components can be represented as a union of a product of its minus boundary components and $1$-handles, $W_+ = (T\times I) \cup \text{(a $1$-handle)}$ and $V_- = (T\times I) \cup \text{(a $1$-handle)}$.
	Let $T$ be the $0$-level in $T\times I$ of $W_+$ and $V_-$ and the attaching disks for the $1$-handles go into the $1$-levels in these products.
	In order to proceed the amalgamation, isotope the attaching disks of both $1$-handles so that they can be disjoint after the projection into $T$ (see Figure \ref{fig-amalgamation}.)
	Note that the projection function may also be changed during this isopoty so that the projection images of these attaching disks do not intersect one another.
	Let us consider tho curves $C_E$ and $C_D$ in $T$ such that $C_E$ ($C_D$ resp.) bounds a disk in $T$ and this disk contains the projection images of the attaching disks for $W_+$ ($V_-$ resp.) in its interior.
	Make sure that the disk for $C_E$ is disjoint from that of $C_D$.
	By using the projection function, we can find the corresponding curve $\bar{C}_E$ in $\partial_+ W_+$ ($\bar{C}_D$ in $\partial_+ V_-$ resp.).
	Consider a disk $E\subset W_+$ ($D\subset V_-$ resp.) such that $\partial E = \bar{C}_E$ ($\partial D=\bar{C}_D$ resp.) like Figure \ref{fig-amalgamation}.
	Obviously, $D$ and $E$ are separating in their compression bodies.
	After amalgamation, $D$ and $E$ are disjoint separating disks in their handlebodies and the boundary of one is not isotopic to that of the other in the resulting Heegaard surface.\\

\begin{figure}
	\includegraphics[width=12cm]{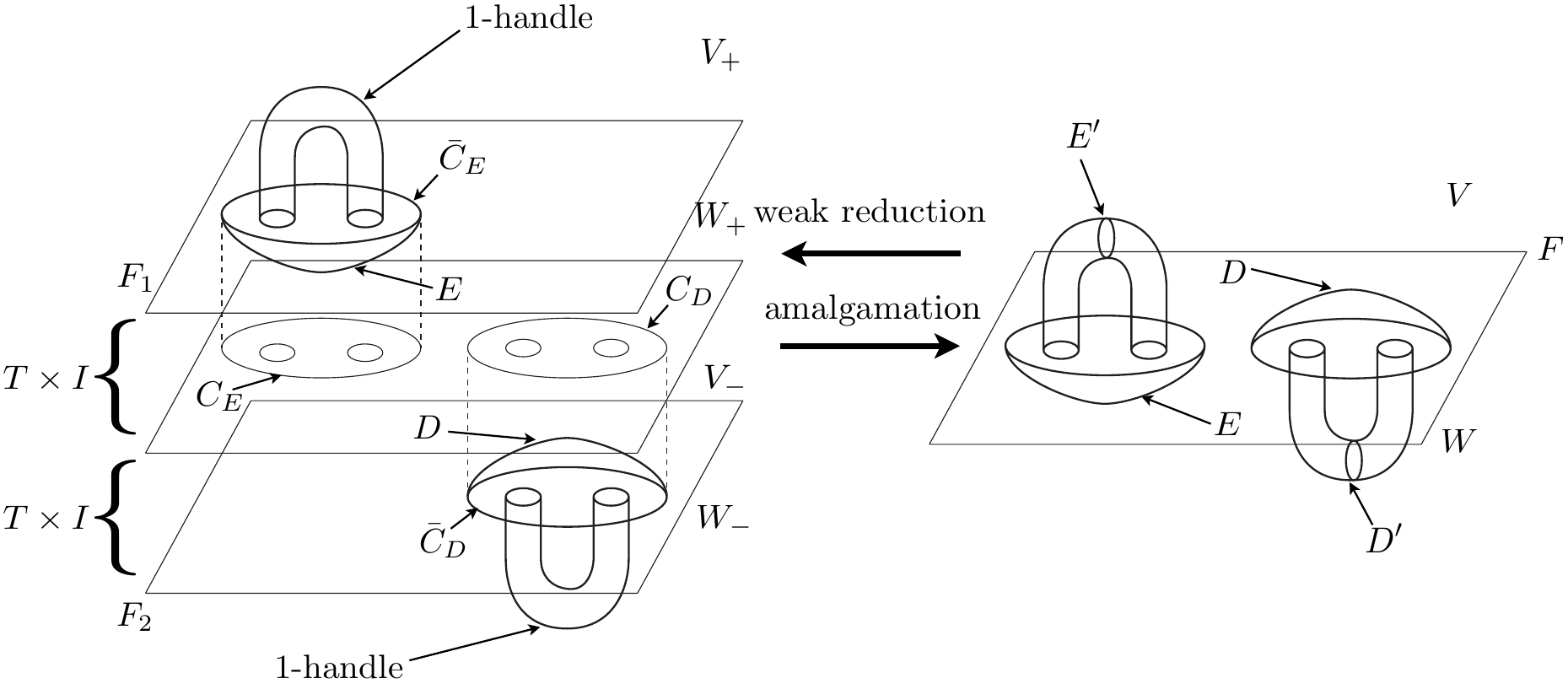}
	\caption{In the amalgamation case, we can find a weak reducing pair where both disks are separating.\label{fig-amalgamation}}
\end{figure}

$(\Rightarrow)$ 
	Suppose that there is a weak reducing pair $(D,E)$ where both disks are separating and the boundaries of both disks are not isotopic in $F$.
	Let $D\subset V$ and $E\subset W$.
	Now $F_{DE}$ consists of three tori $F'_{DE}$, $F''_{DE}$, and $F'''_{DE}$, where $F'_{DE}$ ($F'''_{DE}$ resp.) bounds a solid torus $V'$ in $V$ ($W'$ in $W$ resp.). 
	Find a meridian disk $D'$ ($E'$ resp.) in $V'$ ($W'$ resp.) such that $D'$ ($E'$ resp.) is disjoint from $D$ ($E$ resp.).
	Let $T$ be the surface  $F_{D'E'}$.
	Then, $F$ is obtained from $T$ by attaching boundaries of two $1$-handles whose cocores are $D'$ and $E'$ and removing the interiors of attaching disks of these handles.
	Now we get the right figure of Figure \ref{fig-amalgamation}.
	Since $V'\cap W'=\emptyset$, $(D', E')$ is a weak reducing pair.
	Therefore, if we perform the weak reduction by $(D',E')$, then we get the generalized Heegaard splitting where both thick surfaces are of genus two and the unique thin surface is torus.
\end{proof}

\begin{lemma} \label{lemma-differentlabel}
	Assume $M$, $H$, and $F$ as in Lemma \ref{lemma7} and consider $\WR$.
	If there is an edge labelled D (E resp.) in $\WR$ whose E-disk (D-disk resp.) is non-separating in its compression body, then $\WR$ must have a line segment of length two whose edges are labelled differently.
\end{lemma}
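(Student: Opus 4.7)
Given a D-edge $(D_0, E)-(D_1, E)$ whose common E-disk $E$ is non-separating in $W$, so that $\partial E$ is non-separating in $F$, the plan is to construct an E-edge sharing the vertex $(D_0, E)$; this pair of adjacent edges will form the required length-two line segment with differently labelled edges. By Corollary \ref{corollary-pants} applied to the given D-edge, we may assume $\partial D_0$ is separating and $\partial D_1$ is non-separating in $F$, that $\partial D_0 \cup \partial D_1$ cuts off a pair of pants from $F$, and that $\partial E$ lies in the once-punctured genus-two component $G$ of $F \setminus \partial D_0$; consequently $\partial D_1$ lies in the complementary once-punctured torus component $T$. Since $\partial E$ is non-separating in $F$ and $\partial E \subset G$, it is non-separating in $G$ as well, so compressing $F$ along $E$ yields $F_E = T \cup_{\partial D_0} G_E$ where $G_E$ is a once-punctured torus. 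Let $\delta'$ and $\delta''$ denote the two parallel copies of $E$ sitting on $G_E$, with boundary circles $c'$ and $c''$.

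The key step is to mimic the band-sum construction from the proof of Lemma \ref{lemma-DorE}, now in $W$ with $E$ playing the role of the non-separating disk, but confining the connecting arc to the once-punctured torus $G_E$. Pick an essential arc $\alpha \subset G_E$ running from $c'$ to $c''$, missing $\operatorname{int}\delta'$ and $\operatorname{int}\delta''$ and traversing the handle of $G_E$ at least once; such arcs exist because this sub-surface has positive genus. Band-summing $\delta'$ and $\delta''$ along $\alpha$ and pushing the interior into $W$ cut along $E$ produces a properly embedded separating disk $E^\alpha \subset W$. By construction $E^\alpha \cap E = \emptyset$; since $\alpha$ lies in $G_E$ and avoids $\partial G_E = \partial D_0$, we get $E^\alpha \cap D_0 = \emptyset$, and because $\partial D_1 \subset T$ is disjoint from $G_E$ we also get $E^\alpha \cap D_1 = \emptyset$. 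Moreover $E^\alpha$ represents a vertex distinct from $E$ in $\WR$, since $\partial E^\alpha$ is separating in $F$ while $\partial E$ is not.

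It remains to verify that $E^\alpha$ is essential in $W$. One side of $\partial E^\alpha$ in $F$ is the sub-surface obtained by attaching the rectangular band neighborhood of $\alpha$ to the annular neighborhood of $\partial E$; a direct Euler-characteristic count gives $\chi = 0 + 1 - 2 = -1$ together with exactly one boundary circle, so this side is a once-punctured torus, and hence the complementary side is a once-punctured genus-two surface. Since neither side is a disk, $\partial E^\alpha$ is essential in $F$, and therefore $E^\alpha$ is essential in $W$. Thus $(D_0, E)-(D_0, E^\alpha)$ is an E-edge of $\WR$, and combined with the given D-edge we obtain the length-two line segment $(D_1, E)-(D_0, E)-(D_0, E^\alpha)$ whose two edges carry different labels. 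The case of an E-edge with non-separating D-disk is handled symmetrically by swapping the roles of $V$ and $W$. The main technical point is the essentiality of $E^\alpha$, which is pinned down by the Euler-characteristic bookkeeping once $\alpha$ has been placed inside $G_E$.
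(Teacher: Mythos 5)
Your proof is correct and follows essentially the same approach as the paper's: both reduce to the same once-punctured torus (you compress along $E$ and cut along $\partial D_0$, the paper compresses along $D_0$ then $E$) and then band-sum two parallel copies of $E$ along an arc avoiding $\partial D_0$ to produce a separating disk $E^\alpha$ in $W$ disjoint from $D_0$ and $E$, yielding the E-edge $(D_0,E)-(D_0,E^\alpha)$. Your explicit Euler-characteristic verification of the essentiality of $E^\alpha$ is a welcome clarification of a step the paper delegates to the proof of Lemma~\ref{lemma-DorE}, and your extra hypothesis that $\alpha$ traverses the handle of $G_E$ is not actually needed, since the Euler-characteristic count already shows neither side of $\partial E^\alpha$ is a disk for any choice of $\alpha$.
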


\begin{proof}
	Without loss of generality, suppose that there is an edge $(D_0, E)-_{e}(D_1, E)$ in $\WR$, $\partial D_0$ is separating, $\partial D_1$ is non-separating, and $\partial E$ is non-separating in $F$ by Corollary \ref{corollary-pants} and the assumption of this lemma.

	Let $F'_{D_0}$ be the genus two component of $F_{D_0}$.
	We can check that $\partial E\subset F'_{D_0}$ and $\partial D_1\cap F'_{D_0}=\emptyset$ by Corollary \ref{corollary-pants}.
	If we compress $F'_{D_0}$ along $E$ again, then $(F'_{D_0})_{E}$ is a torus.
	Let $E'$ and $E''$ be two parallel copies of $E_0$ and $D_0'$ be a parallel copy of $D_0$  in $(F'_{D_0})_E$.
	If we using the band sum arguments by using an arc connecting $E'$ and $E''$ missing $\partial E'$, $\partial E''$, and $\partial D_0'$ on its interior as in the proof of Lemma \ref{lemma-DorE}, we get a family $\{E_\alpha\}$ of infinitely many essential separating disks in $W$ which miss $E$ and $D_0$.
	Choose a disk $E_\alpha$ from $\{E_\alpha\}$, and make an edge $(D_0, E_\alpha)-_{f}(D_0,E)$ labelled E.
	Therefore,  $(D_0, E_\alpha)-_{f}(D_0, E)-_{e}(D_1, E)$ is the wanted line segment of length two whose edges are labelled differently.
\end{proof}

\begin{corollary} \label{corollary-differentlabel}
	Assume $M$, $H$, and $F$ as in Lemma \ref{lemma7}, consider $\WR$, and add the assumption that $M$ is closed.
	If $\dim(\WR)=1$, then $\WR$ must have a line segment of length two whose edges are labelled differently.
\end{corollary}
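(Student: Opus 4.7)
I would argue by contradiction, assuming $\WR$ contains no line segment of length two whose edges carry different labels. The contrapositive of Lemma \ref{lemma-differentlabel} then forces every D-edge of $\WR$ to have a separating common E-disk, and symmetrically every E-edge to have a separating D-disk. Since $\dim(\WR)=1$, there is at least one edge; without loss of generality take it to be a D-edge $(D_0,E)-(D_1,E)$. By the contrapositive above, $E$ is separating in $W$, and by Corollary \ref{corollary-pants} I may arrange that $D_0$ is separating and $D_1$ non-separating in $V$. Hence $(D_0,E)$ is a weak reducing pair both of whose disks are separating in their compression bodies, with $\partial D_0$ not isotopic to $\partial E$ in $F$ (an isotopy would reduce $H$, contradicting the irreducibility of $M$ or the unstabilizedness of $H$).

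Because $M$ is closed, Lemma \ref{lemma-corollary} now applies to $(D_0,E)$ and exhibits the amalgamation structure of $H$: $F_{D_0 E}$ is a disjoint union of three tori, two of which, $F'_{D_0 E}$ and $F'''_{D_0 E}$, bound solid tori $V'\subset V$ and $W'\subset W$ respectively. From the proof of that lemma I extract a meridian disk $E'$ of the solid torus $W'$ chosen so that $E'\cap E=\emptyset$. The plan is to verify that $(D_0,E')$ is itself a weak reducing pair and that $E'$ is not isotopic to $E$ in $W$; it then follows that $(D_0,E)-(D_0,E')$ is an E-edge of $\WR$, and concatenating it with the original D-edge $(D_0,E)-(D_1,E)$ yields the length-two line segment $(D_1,E)-(D_0,E)-(D_0,E')$ with edges of different labels, contradicting the standing hypothesis.

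The verification reduces to reading off the decomposition of $F$ induced by the disjoint separating curves $\partial D_0$ and $\partial E$: they split $F$ into a once-punctured torus $F'$ on the $V'$-side, a twice-punctured torus $F''$ in the middle, and a once-punctured torus $F'''$ on the $W'$-side. The meridian disk of the solid torus $W'$ has its boundary in $\mathrm{int}(F''')$, so $\partial E'$ is automatically disjoint from $\partial D_0\subset \partial F'\cap\partial F''$; this gives the weak reducing pair. Non-isotopy of $E$ and $E'$ is immediate because $E$ is separating while the solid-torus meridian $E'$ is non-separating in $W$.

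I expect the only delicate step to be this final bookkeeping: one must make sure the disk $E'$ produced by the amalgamation construction has its boundary landing in the correct complementary component of $F$ so as to be disjoint from $D_0$ for free. Once that piece of the pair-of-pants picture is confirmed, the contradicting length-two line segment assembles directly and the proof closes.
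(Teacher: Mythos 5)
Your proof is correct and follows essentially the same route as the paper: reduce to the case where the common $E$-disk of the D-edge is separating (you via the contrapositive of Lemma \ref{lemma-differentlabel}, the paper via a direct case split on that lemma), then produce a meridian disk of the solid torus cut off by $E$ whose boundary lies in the once-punctured torus $\tilde{F}$ and is therefore disjoint from $\partial D_0$. Your invocation of Lemma \ref{lemma-corollary} is just an inlined version of the paper's direct construction of $W'$ and $E_m$, and your bookkeeping of the $F'\cup F''\cup F'''$ decomposition matches the paper's argument that $\tilde{F}\subset\bar{F}$.
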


\begin{proof}
	Without loss of generality, suppose that there is an edge $(D_0, E)-_{e}(D_1, E)$ in $\WR$, $\partial D_0$ is separating, and $\partial D_1$ is non-separating in $F$ by Corollary \ref{corollary-pants}.
	If $\partial E$ is non-separating in $F$, then the proof ends by Lemma \ref{lemma-differentlabel}.
	Therefore, assume that $\partial E$ is separating in $F$, i.e. $E$ is separating in $W$.
	Hence, $E$ cuts $W$ into a solid torus $W'$ and a genus two handlebody, where $W'$ is bounded by $\tilde{F}\cup E$ in $W$. 
	($\tilde{F}$ is a once-punctured torus which $\partial E$ cut off from  $F$.) 
	Let $E_m$ be a meridian disk of $W'$. 
	Isotope $E_m$ in $W'$ so that $\partial E_m$ is contained in the interior of $\tilde{F}$.
	Let $\bar{F}$ be the once-punctured genus two component of $F-\partial D_0$.
	Since $\partial E\subset \bar{F}$ by Corollary \ref{corollary-pants} and $\partial E$ cuts $\bar{F}$ into a twice-punctured torus and a once-punctured torus (otherwise, $\partial E$ is isotopic to $\partial D_0$ in $\bar{F}$), we get either $\tilde{F}\subset \bar{F}$ or $\tilde{F}$ is of genus two.
	The latter case contradicts that $\tilde{F}$ is a once-punctured torus.
	Hence, $\partial D_0 \cap \partial E_m=\emptyset$, i.e. we get an edge $(D_0, E_m)-_{f}(D_0,E)$ labelled E. 
	Therefore, $(D_0, E_m)-_{f}(D_0, E)-_{e}(D_1, E)$ is the wanted line segment of length two whose edges are labelled differently. 
\end{proof}

Let us prove Corollary \ref{corollary-main}.
If $M$ is not an amalgamation of two genus two splitting along a torus, then we cannot choose a weak reducing pair whose disks are separating in their hadlebodies by Lemma \ref{lemma-corollary}.
(Since $M$ is irreducible and the splitting is unstabilized, the splitting is irreducible, i.e. we can get rid of the possibilities of weak reducing pairs whose disks have isotopic boundaries.)
Hence, there is no weak reducing pair such that each of both disks cuts off a solid torus in its compression body since the genus of the splitting is three.
Therefore, if we use Theorem \ref{theorem-main}, then the proof ends.\\

In addition, we can induce some equivalent conditions for $\dim(\WR)=0$ when $M$ is closed. 

\begin{corollary}\label{corollary-last}
	Let $M$ be a closed orientable irreducible $3$-manifold and $H=(V,W;F)$ be an unstabilized weakly reducible genus three Heegaard splitting.
	Then, the following three statements are equivalent.
	\begin{enumerate}
		\item \label{cm1} There is no weak reducing pair such that each of both disks cuts off a solid torus in its handlebody.
		\item \label{cm2} $H$ cannot be represented as an amalgamation of genus two splittings along a torus.
		\item \label{cm3}$\dim(\WR)=0$.
	\end{enumerate}
\end{corollary}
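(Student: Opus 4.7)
My plan is to establish $(1) \Leftrightarrow (2)$ directly from Lemma \ref{lemma-corollary}, and then to prove $(1) \Leftrightarrow (3)$ by exploiting the cluster analysis built up in Section \ref{section-proof-main-theorem}. The vertex set of $\WR$ is non-empty throughout because $H$ is weakly reducible, and by Lemma \ref{lemma7} we already know $\dim(\WR) \leq 1$, so condition (3) is simply the statement that $\WR$ has no edges.

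For $(1) \Leftrightarrow (2)$, I would first observe that in a genus three handlebody any essential separating disk splits it into a solid torus and a genus two handlebody, so ``cuts off a solid torus'' coincides with ``is separating in its handlebody''. Since $M$ is irreducible and $H$ is unstabilized, the splitting is irreducible, which rules out weak reducing pairs whose two disks have isotopic boundaries on $F$. Thus condition (1) is precisely the negation of the hypothesis of Lemma \ref{lemma-corollary}, and that lemma immediately supplies the equivalence with (2).

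For $(1) \Rightarrow (3)$, I would argue by contradiction. Assuming $\dim(\WR) = 1$, the closedness of $M$ lets me apply Corollary \ref{corollary-differentlabel} to produce a length-two line segment in $\WR$ whose two edges carry different labels. Corollary \ref{corollary-difffourcurves} then supplies four disks satisfying its conditions (\ref{corollary-difffourcurves-1})--(\ref{corollary-difffourcurves-3}); after the replacement permitted by condition (\ref{corollary-difffourcurves-3}), these four disks are pairwise disjoint. Among them the two separating disks (one in $V$, one in $W$) form a weak reducing pair in which both disks cut off solid tori, contradicting (1). This is the direction where the closedness hypothesis really enters, and it is the step I expect to be the main obstacle in the sense that it depends on the full cluster machinery; once that machinery is in hand the argument is essentially formal.

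For $(3) \Rightarrow (1)$, I would again argue by contradiction: suppose there is a weak reducing pair $(D,E)$ with both disks separating. Then $D$ cuts $V$ into a solid torus $V'$ and a genus two handlebody, and the once-punctured torus $F' = \partial V' \cap F$ is one side of $F \setminus \partial D$; call the other side $F''$. Irreducibility of the splitting forbids $\partial E$ from being isotopic to $\partial D$, and since any essential separating simple closed curve on the once-punctured torus $F'$ would have to be parallel to its boundary, $\partial E$ must lie in $F''$. Choose a meridian disk $D_1$ of $V'$ with $\partial D_1$ in the interior of $F'$; then $D_1$ is essential and non-separating in $V$ (hence not isotopic to the separating disk $D$), and it is disjoint from $D$ and from $E$. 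Therefore $(D,E)-(D_1,E)$ is a $D$-labeled edge in $\WR$, contradicting $\dim(\WR) = 0$.
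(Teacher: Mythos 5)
Your proposal is correct and follows essentially the same route as the paper: Lemma \ref{lemma-corollary} for the equivalence of (1) and (2), Corollary \ref{corollary-differentlabel} together with Corollary \ref{corollary-difffourcurves} to derive a separating weak reducing pair from $\dim(\WR)=1$, and a meridian disk of the cut-off solid torus to build an edge for the converse. Your added justification for why $\partial E$ must lie in the genus-two side of $F\setminus\partial D$ (so that the meridian disk is disjoint from $E$) is a useful detail that the paper leaves implicit, but it does not change the argument.
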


\begin{proof}
	Since $M$ is irreducible, (\ref{cm1}) and (\ref{cm2}) are equivalent by Lemma \ref{lemma-corollary}.
	Therefore we will prove that (\ref{cm1}) and (\ref{cm3}) are equivalent.

	(\ref{cm1})$\Rightarrow$(\ref{cm3})
	Suppose that $\dim(\WR)=1$, i.e. the negation of $\dim(\WR)=0$ by Lemma \ref{lemma7}.
	Then, $\WR$ must have a line segment of length two whose edges are labelled differently by Corollary \ref{corollary-differentlabel}.
	This gives the four disks $\bar{D}$, $\tilde{D}\subset V$ and $\bar{E}$, $\tilde{E}\subset W$ satisfying the four conditions of Corollary \ref{corollary-difffourcurves}.
	Let $\bar{D}$ and $\bar{E}$ be the two separating disks among them.
	Since we can assume $\bar{D}\cap \bar{E}=\emptyset$, we get a weak reducing pair of separating disks $(\bar{D}, \bar{E})$.
	By the assumption that $M$ is closed, $\bar{D}$ ($\bar{E}$ resp.) cuts off a solid torus from $V$ ($W$ resp.).

	(\ref{cm3})$\Rightarrow$(\ref{cm1})
	Suppose that $\dim(\WR)=0$.
	Suppose that there is weak reducing pair $(D, E)$ such that each of both disks cuts off a solid torus in its handlebody.
	Let the solid torus which $D$ cuts off from $V$ be $V'$ and $D_m$ be a meridian disk of $V'$.
	If we isotope $D_m$ in $V'$ so that $\partial D_m$ misses $D$, then we get a $1$-simplex $(D, E)-(D_m, E)$ in $\WR$, i.e $\dim(\WR)=1$ and this gives a contradiction.
	Therefore, There is no weak reducing pair such that each of both disks cuts off a solid torus in its handlebody.
\end{proof}

\section{Examples of Theorem \ref{theorem-main}\label{section-examples}}

\subsection{The three-torus $T^3$}
It is well known that $T^3 = (\text{torus})\times S^1$ has the unique minimal genus three Heegaard splitting (see Theorem 4.2 of \cite{FH} and Theorem 5.7 of \cite{Schultens1}.)

Let $C$ be a cube $\{(x,y,z)\in\mathbb{R}^3|-1\leq x,y,z\leq 1\}$ (see (a) of Figure \ref{fig-t3}.)
If we identify the three pairs of opposite faces of $C$, we get a 3-torus $M = (\text{torus})\times S^1$.
Let $q : C\to M$ be the quotient map.
An image by $q$ of a tubular neighborhood of union of three axis in $C$ is a genus three handlebody $V$.
Let $\overline{M-V}$ be $W$.
Then it is easy to see that $W$ is also a genus three handlebody.
This is the standard Heegaard splitting of genus three for $T^3$.

\begin{figure}
	\includegraphics[width=12cm]{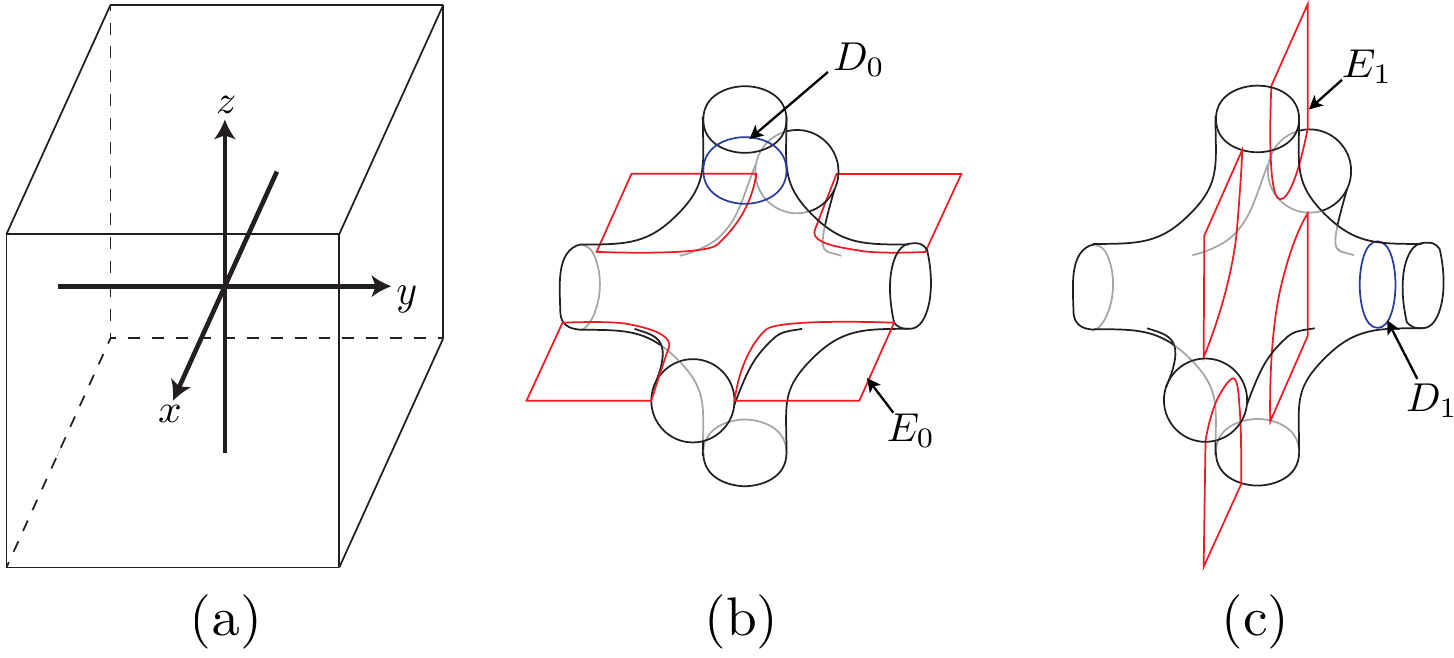}
	\caption{The standard Heegaard splitting of genus three of $T^3$, and its compressing disks which realize the critical Heegaard splitting.\label{fig-t3}}
\end{figure}

We will prove that this splitting is critical by using Theorem \ref{theorem-main}.
If there is a weak reducing pair for the standard splitting of genus three for $T^3$, then both disks are non-separating in their handlebodies (see Lemma 6 of \cite{JJ}.)
Therefore, there is no choice of a weak reducing pair such that each of both disks cuts off a solid torus in its handlebody.
By the existence of the weak reducing pairs $(D_0,E_0)$ and $(D_1, E_1)$ as in Figure \ref{fig-t3}, we can induce that this splitting is critical by Theorem \ref{theorem-main}.
This is an example of $\dim(\WR)=0$ (see Corollary \ref{corollary-last}.)
Also we can check that the standard splitting of genus three for $T^3$ cannot be represented by an amalgamation of two genus two Heegaard splittings along a torus by Corollary \ref{corollary-last}.

Let us check Corollary \ref{corollary-main-2} for this example.
Let $S$ be the horizontal torus in $T^3$ in the left of Figure \ref{fig-t3-incomp} and $F$ be the Heegaard surface of this splitting.
Since the fibers of the torus bundle $T^3=T^2\times S^1$ is incompressible in $T^3$, $S$ is incompressible.
Here, we can check that $S\cap F$ is inessential in $S$.
Let $c$ be an essential closed curve in $S$ as in the left of Figure \ref{fig-t3-incomp}.
If we push $S$ upward along $c$ until we get the right of Figure \ref{fig-t3-incomp}, then we get a torus $S'$ isotopic to $S$.
$S'\cap F$ consists of two circles, where each of both is depicted as a dotted curve in the right of Figure \ref{fig-t3-incomp}.
These curves are all essential in both $F$ and $S'$.
Therefore, $S'$ is the surface satisfying Corollary \ref{corollary-main-2}.

Note that there is another proof to show that this splitting is critical (see \cite{Lee}.)

\begin{figure}
	\includegraphics[width=9cm]{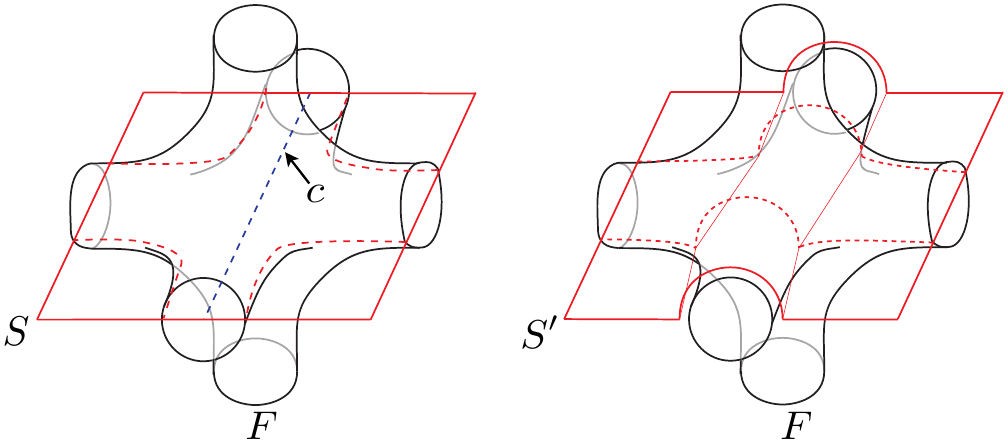}
	\caption{$S'$ is satisfying Corollary \ref{corollary-main-2}.\label{fig-t3-incomp}}
\end{figure}

\subsection{The three component chain exterior}
Let $M$ be the exterior of the three component chain pictured in Figure \ref{fig-three-chain}.
(The manifold $M$ may also be considered as $P\times S^1$, where $P$ is a pair of pants.)
Let $T_1$, $T_2$, and $T_3$ be the three boundary components of $\partial M$, and let us consider two arcs $a_1$ and $a_2$ such that $T_1$ and $T_2$ ($T_3$ and $T_2$ resp.) are connected by $a_1$ ($a_2$ resp.) as in Figure \ref{fig-three-chain}.
\begin{figure}
	\includegraphics[width=9cm]{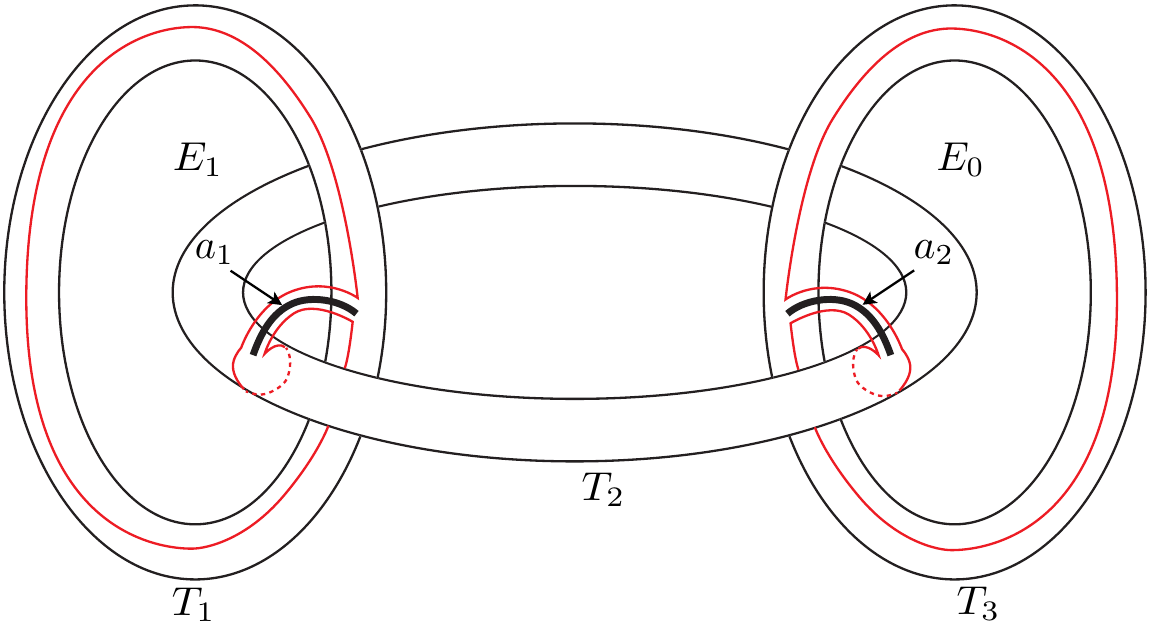}
	\caption{A tunnel system of the three component chain\label{fig-three-chain}}
\end{figure}
It is easy to show that $\{a_1, a_2\}$ is a tunnel system for $M$.
Therefore we get a Heegaard splitting $(V,W;F)$ of $M$, where $V$ is a genus three compression body whose minus boundary consists of $T_1$, $T_2$, and $T_3$ and $W$ is a genus three handlebody.
If the splitting is stabilized, then we can reduce the tunnel system.
But a compression body whose minus boundary consists of three tori must be of at least genus three, we cannot reduce this tunnel system, i.e. the splitting is unstabilized.
Let $D_0$ ($D_1$ resp.) be the cocore of $a_1$ ($a_2$ resp.) and $E_0$ ($E_1$ resp.) be the compressing disk for $W$ depicted as in Figure \ref{fig-three-chain}. 
Here, $(D_0, E_0)$ and $(D_1, E_1)$ are weak reducing pairs.
Moreover, $D_0\cap E_1\neq \emptyset$ and $D_1\cap E_0\neq \emptyset$. 
If there is an essential separating disk in $V$, then 
it cannot cut off a compression body with empty minus boundary from $V$.
That is, there is no weak reducing pair such that each of both disks cuts off a solid torus in its compression body.
Therefore, this splitting is critical by Theorem \ref{theorem-main}.

Now we claim that $\dim(\WR)=1$. 
Let $\bar{E}_0$ and $\tilde{E}_0$ be two parallel copies of $E_0$ as in Figure \ref{fig-three-chain-2}.
\begin{figure}
	\includegraphics[width=9cm]{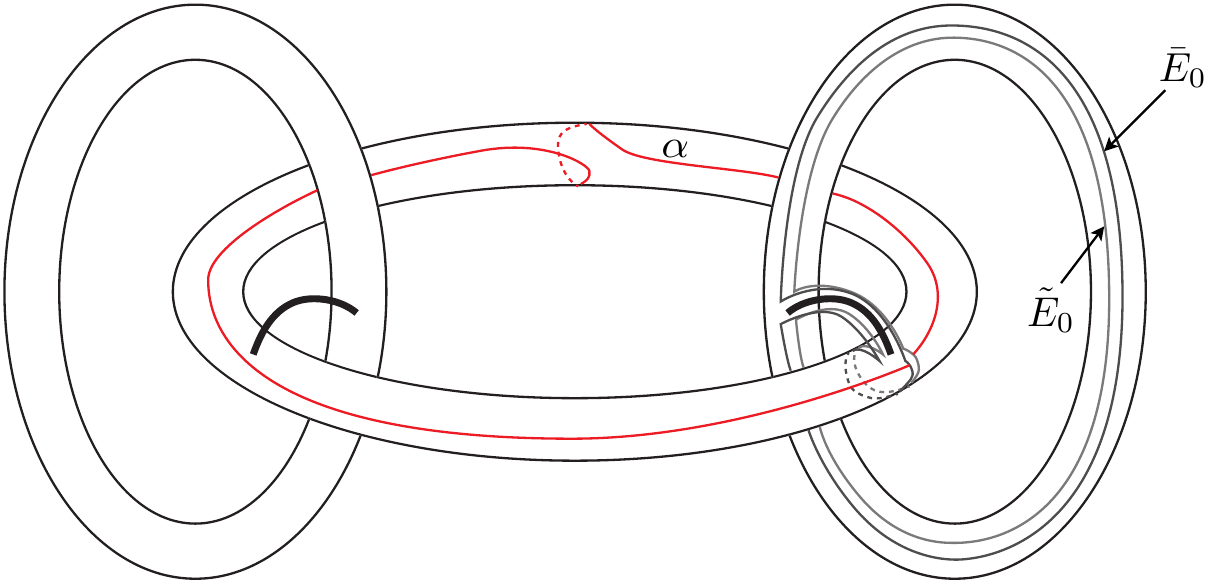}
	\caption{We can find the disk $E'$ by the band sum of two parallel copies of $E_0$\label{fig-three-chain-2}}
\end{figure}
If we consider the band sum of $\bar{E}_0$ and $\tilde{E}_0$ by the arc $\alpha$ connecting $\bar{E}_0$ and $\tilde{E}_0$ as in Figure \ref{fig-three-chain-2}, then we get an essential separating disk $E'$ in $W$.
Since $E'\cap D_0=\emptyset$, $(D_0, E_0)-(D_0, E')$ is a $1$-simplex in $\WR$.
This example means that the first statement and the third statement of Corollary \ref{corollary-last} are not equivalent for manifolds with non-empty boundary.

Note that E. Sedgwick proved that this splitting is not of minimal genus in \cite{Sedgwick}.

\section*{Acknowledgments}
The author is grateful to Prof. Jung-Hoon Lee for carefully reading the paper.
The author was supported by the National Research Foundation of Korea grant funded by
the Korean Government(Ministry of Education, Science and Technology)[NRF-2011-355-C00014].

\bibliographystyle{agsm}

\end{document}